\theoremstyle{plain} 
\newtheorem{Theorem}{Theorem}
\newtheorem{Lemma}[Theorem]{Lemma}
\newtheorem{proposition}[Theorem]{Proposition}
\newtheorem{Remark}[Theorem]{Remark}
\theoremstyle{remark}
\date{}
\author{\.Ilker Arslan}
\address{Sabanci University,\\ Orhanli, 34956 Tuzla, Istanbul, Turkey}
\email[\.I.~Arslan]{ilkerarslan@sabanciuniv.edu}
\begin{document}

\title{Characterization  of the potential smoothness of one-dimensional Dirac operator subject to general boundary conditions and its Riesz basis property}



\maketitle

\begin{abstract}
The one-dimensional Dirac operator with periodic potential $V=\begin{pmatrix} 0 & \mathcal{P}(x) \\ \mathcal{Q}(x) & 0 \end{pmatrix}$, where $\mathcal{P},\mathcal{Q}\in L^2([0,\pi])$  subject to periodic, antiperiodic or a general strictly regular boundary condition $(bc)$ has discrete spectrums. It is known that, for large enough $|n|$ in the disc centered at $n$ of radius 1/4, the operator has exactly two (periodic if $n$ is even or antiperiodic if $n$ is odd) eigenvalues $\lambda_n^+$ and $\lambda_n^-$ (counted according to multiplicity) and one eigenvalue $\mu_n^{bc}$ corresponding to the boundary condition $(bc)$. We prove that the smoothness of the potential could be characterized by the decay rate of the sequence $|\delta_n^{bc}|+|\gamma_n|$, where $\delta_n^{bc}=\mu_n^{bc}-\lambda_n^+$ and $\gamma_n=\lambda_n^+-\lambda_n^-.$ Furthermore, it is shown that the Dirac operator with periodic or antiperiodic boundary condition has the Riesz basis property if and only if $\sup\limits_{\gamma_n\neq0} \frac{|\delta_n^{bc}|}{|\gamma_n|}$ is finite.
\end{abstract}

\section{Introduction}

We consider the one-dimensional Dirac operator
\begin{equation}\label{intro00}Ly=i\begin{pmatrix} 1 & 0 \\ 0 & -1
\end{pmatrix}
\frac{dy}{dx}  + \begin{pmatrix} 0 & \mathcal{P}(x) \\ \mathcal{Q}(x) & 0 \end{pmatrix} y, \quad y = \begin{pmatrix} y_1\\y_2
\end{pmatrix},\end{equation}
where $\mathcal{P},\mathcal{Q}\in L^2([0,\pi])$, with periodic, antiperiodic and Dirichlet boundary conditions. We also consider a general boundary condition (bc) given by

\begin{equation}\label{intro11}
  \begin{split}
    y_1(0)+by_1(\pi)+ay_2(0)=0\\
    dy_1(\pi)+cy_2(0)+y_2(\pi)=0,
  \end{split}
\end{equation}
where $a,b,c,d$ are complex numbers subject to the restrictions
\begin{equation}\label{intro22}
b+c=0,\quad ad=1-b^2
\end{equation}
with $ad\neq0$.
It is well-known that if $\mathcal{P},\mathcal{Q}\in L^2([0,\pi])$, $\mathcal{P}=\overline{\mathcal{Q}}$ and we extend $\mathcal{P}$ and $\mathcal{Q}$ as $\pi$-periodic functions on $\mathbb{R}$, then the operator is self-adjoint and has a band-gap structured spectrum of the form

$$Sp(L)=\bigcup_{n=-\infty}^{+\infty}[\lambda_{n-1}^+,\lambda_{n}^-],$$
where

$$\cdots\leq\lambda_{n-1}^+<\lambda_n^-\leq\lambda_{n}^+<\lambda_{n+1}^-\cdots.$$

In addition, Floquet theory shows that the endpoints $\lambda_n^{\pm}$ of these spectral gaps are eigenvalues of the operator (\ref{intro00}) subject to periodic boundary conditions or antiperiodic boundary conditions. Furthermore, the spectrum is discrete for each of the above boundary conditions. Also, for $n\in\mathbb{Z}$ with large enough $|n|$ the disc with center $n$ and radius $1/8$ contains two eigenvalues (counted with multiplicity) $\lambda^+_n$ and $\lambda^-_n$ of periodic (if $n$ is even) or antiperiodic (if $n$ is odd) boundary conditions and as well one eigenvalue $\mu_n^{Dir}$ of Dirichlet boundary condition. There is also one eigenvalue $\mu_n^{bc}=\mu_n$ of the general boundary condition (bc) given above (which will be proven in the first section).

There is a very close relationship  between the smoothness of the potential and the rate of decay of the deviations $|\lambda_n^+-\lambda_n^-|$ and $|\mu_n^{Dir}-\lambda_n^+|$.
The story of the discovery of this relation was initiated by H. Hochstadt \cite{Ho1,Ho2} who considered the (self-adjoint) Hill's operator and proved that the decay rate of the spectral gap $\gamma_n=|\lambda_n^+-\lambda_n^-|$ is $O(1/n^{m-1})$ if the potential has $m$ continuous derivatives. Furthermore, he showed that every finite-zone potential (i.e., $\gamma_n=0$ for all but finitely many $n$) is a $C^{\infty}$-function. Afterwards, some authors \cite{LP,MO1,MT} studied on this relation and showed that if $\gamma_n$ is $O(1/n^k)$ for any $k\in\mathbb{Z}^+$, then the potential is infinitely differentiable. Furthermore, Trubowitz \cite{Tr} showed that the potential is analytic if and only if $\gamma_n$ decays exponentially fast. In the non-selfadjoint case, the potential smoothness still determines the decay rate of $\gamma_n$. However, the decay rate of $\gamma_n$ does not determine the potential smoothness as Gasymov showed \cite{Gas}. In this case, Tkachenko \cite{Tk92,jjsvt,vt} gave the idea to consider $\gamma_n$ together with the deviation $\delta_n^{Dir}=\mu_n^{Dir}-\lambda_n^+$ and obtained characterizations of $C^{\infty}$-smoothness and analyticity of the potential with these deviations $\gamma_n$ and $\delta_n^{Dir}.$ In addition to these developments, Sansuc and Tkachenko \cite{jjsvt2} proved that the potential is in the Sobolev space $H^m$, $m\in\mathbb{N},$ if and only if $\gamma_n$ and $\delta_n^{Dir}$ satisfy
$$\sum(|\gamma_n|^2+|\delta_n^{Dir}|^2)(1+n^{2m})<\infty.$$

The results mentioned above have been obtained by using Inverse Spectral Theory.

Gr\'{e}bert, Kappeler, Djakov and Mityagin studied the relationship between the potential smoothness and the decay rate of spectral gaps for Dirac operators (see \cite{grebert,grebert2,spectra}).

We recall that a characterization of smoothness of a function can be given by weights $\Omega=\Omega(n)_{n\in\mathbb{Z}}$, where the corresponding weighted Sobolev space is 
$$
H(\Omega) = \{v(x) = \sum_{k \in \mathbb{Z}} v_k e^{2ikx}: \quad
\sum_{k \in \mathbb{Z}} |v_k|^2 (\Omega(k))^2 < \infty \}
$$
and the corresponding weighted $\ell^2-$space is
$$\ell^2(\Omega,\mathbb{Z})=\{(x_n)_{n\in\mathbb{Z}}:\sum|x_n|^2(\Omega(n))^2<\infty\}.$$
A weight $\Omega$ is called sub-multiplicative if $\Omega(n+m)\leq\Omega(n)\Omega(m)$ for each $n,m\in\mathbb{Z}.$ It has been proved \cite{spectra,instability,instability2} that for each sub-multiplicative weight $(\Omega(n))_{n\in\mathbb{Z}}$ the following implication holds
$$\mathcal{P},\mathcal{Q}\in H(\Omega)\implies(\gamma_n)\in\ell^2(\Omega,\mathbb{Z}).$$
As mentioned above, the converse does not necessarily hold. However, a converse of this statement was given \cite{spectra,instability} in the self-adjoint case, i.e., when $\mathcal{P}=\overline{\mathcal{Q}}.$ Furthermore, another converse of this statement is shown in terms of sub-exponential weights and a slightly weaker result is obtained in terms of exponential weights in \cite{kst} for Dirac operators with skew-adjoint $L^2-$potentials. Similar results for Schr\"odinger operators were obtained in \cite{KaMi01,DM3,DM5,instability}.

For the non-self-adjoint case, there is a result in \cite{instability} as follows: Let us put
$$\Delta_n=|\lambda^+_n-\lambda^-_n|+|\lambda^+_n-\mu_n^{Dir}|,$$
then for each sub-multiplicative weight $\Omega$
$$\mathcal{P},\mathcal{Q}\in H(\Omega)\implies(\Delta_n)_{n\in\mathbb{Z}}\in\ell^2(\Omega).$$
Moreover, if $\Omega=\Omega(n)_{n\in\mathbb{Z}}$ is a sub-multiplicative weight such that $\log\Omega(n)/n\searrow 0$, then
$$(\Delta_n)_{n\in\mathbb{Z}}\in\ell^2(\Omega)\implies\mathcal{P},\mathcal{Q}\in H(\Omega)$$
and if $\lim\limits_{n\rightarrow\infty}\log\Omega(n)/n>0$, then
$$(\Delta_n)_{n\in\mathbb{Z}}\in\ell^2(\Omega)\implies\exists\;\epsilon>0 : \mathcal{P},\mathcal{Q}\in H(e^{\epsilon|n|}).$$
The proofs are constructed by means of the matrix
$$\begin{pmatrix} \alpha_n(z) & \beta^+_n(z) \\ \beta^-_n(z) &
\alpha_n(z)\end{pmatrix},$$
which has the very important property that a number $\lambda=n+z$ with $|z|<1/2$ is a periodic (if $n$ is even) or antiperiodic (if $n$ is odd) eigenvalue if and only if $z$ is an eigenvalue of the matrix (see Lemma 21, \cite{instability}). The four entries of the matrix depend analytically on $z$ and $V$. They are given explicitly in terms of the Fourier coefficients of $V.$
The deviations $|\gamma_n|+|\delta_n^{Dir}|$ are estimated (see Theorem 66 in \cite{instability}) by the functionals $\beta_n^{\mp}(z)$ as follows
$$\frac{1}{144}(|\beta_n^-(z_n^*)|+|\beta_n^+(z_n^*)|)\leq |\gamma_n|+|\delta_n^{Dir}|\leq54(|\beta_n^-(z_n^*)|+|\beta_n^+(z_n^*)|),$$
where $z_n^*=(\lambda_n^++\lambda_n^-)/2-n.$ This shows the significance of these functionals by means of their asymptotic equivalence with the sequence $|\gamma_n|+|\delta_n^{Dir}|.$

The functionals $\alpha_n(z)$ and $\beta_n^{\mp}(z)$ are also crucial in analysing the Riesz basis property of the Dirac operator. P. Djakov and B. Mityagin \cite{criteria} have proved that the following three claims are equivalent:

(a) The Dirac operator $L$ given by (\ref{intro00}) with a potential $V$ in $L^2([0,\pi])\times L^2([0,\pi])$ subject to periodic or antiperiodic boundary conditions has the Riesz basis property.

(b) $0<\liminf\limits_{\gamma_n\neq0}\frac{|\beta_n^-(z_n^*)|}{|\beta_n^+(z_n^*)|}\quad\text{and}\quad\limsup\limits_{\gamma_n\neq0}\frac{|\beta_n^-(z_n^*)|}{|\beta_n^+(z_n^*)|}<\infty.$

(c) $\sup\limits_{\gamma_n\neq0}\frac{|\delta_n^{Dir}|}{|\gamma_n|}<\infty.$

Similar results concerning Riesz basis property are known for Schr\"odinger operators (see \cite{criteria,GT11,AB01}).

In this paper are obtained new results on potential smoothness and Riesz basis property of one-dimensional Dirac operators. The following theorems give the main results.

\begin{Theorem}\label{intro333}
If \ $V \in L^2([0,\pi])\times L^2([0,\pi])$, then 
$$V\in H(\Omega)\implies(\Delta_n^{bc})_{n\in\mathbb{Z}}\in\ell^2(\Omega)$$
for each submultiplicative weight $\Omega$, where
$$\Delta_n^{bc}=|\lambda^+_n-\lambda^-_n|+|\lambda^+_n-\mu_n^{bc}|.$$

Conversely, if $\Omega=(\Omega(n))_{n\in\mathbb{Z}}$ is a submultiplicative weight such that
$\frac{\log\Omega(n)}{n}\searrow 0$, then 
$$(\Delta_n^{bc})_{n\in\mathbb{Z}}\in\ell^2(\Omega)\implies V\in H(\Omega).$$

Furthermore, if $\Omega$ is a submultiplicative weight such that $\lim\limits_{n\rightarrow\infty}\frac{\log\Omega(n)}{n}>0,$ then
$$(\Delta_n^{bc})_{n\in\mathbb{Z}}\in\ell^2(\Omega)\implies\exists\;\epsilon>0 : \ V\in H(e^{\epsilon|n|}).$$
\end{Theorem}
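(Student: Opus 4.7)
The plan is to reduce Theorem \ref{intro333} to the known periodic/antiperiodic and Dirichlet results in \cite{instability} via a two-sided asymptotic estimate
$$c_1\bigl(|\beta_n^-(z_n^*)| + |\beta_n^+(z_n^*)|\bigr) \leq \Delta_n^{bc} \leq c_2\bigl(|\beta_n^-(z_n^*)| + |\beta_n^+(z_n^*)|\bigr)$$
valid for all sufficiently large $|n|$, with constants depending only on the boundary coefficients $a, b, c, d$. Given this, each of the three implications follows directly from the parallel statement for $(\beta_n^\pm(z_n^*))$ recalled in the introduction, together with the bound for $\gamma_n$ from Theorem 66 of \cite{instability}; only the term $|\delta_n^{bc}|$ requires new work.

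The first step is to write down a scalar characteristic equation for $\mu_n^{bc}$. Using a fundamental matrix solution $Y(x,\lambda)$ of the Dirac system and inserting it into (\ref{intro11}), the (bc)-eigenvalues appear as the zeros of $\det(U_0 + U_1 Y(\pi,\lambda))$, where $U_0$ and $U_1$ are the $2 \times 2$ matrices of the boundary data. Following the same Fourier-analytic reduction as in the periodic/antiperiodic case in \cite{instability}, I would rewrite this determinant on the disk of radius $1/4$ around $n$ in the form
$$F^{bc}\bigl(z;\alpha_n(z),\beta_n^+(z),\beta_n^-(z);a,b,c,d\bigr)=0,$$
with $F^{bc}$ jointly analytic in its arguments. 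The restrictions $b+c=0$ and $ad=1-b^2$ (with $ad\neq 0$) encode strict regularity and guarantee that the leading coefficient of $F^{bc}$ is nondegenerate uniformly in $n$.

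Next I would Taylor-expand $F^{bc}$ around $z_n^*$ and use the periodic/antiperiodic characteristic equation (which at $z=z_n^*$ relates $\alpha_n(z_n^*)-z_n^*$ to $\gamma_n$ and to $\sqrt{\beta_n^+\beta_n^-}$) to solve for $\mu_n^{bc}$ up to a controllable error. This produces the upper bound $|\delta_n^{bc}|\lesssim |\beta_n^+(z_n^*)|+|\beta_n^-(z_n^*)|$; combined with the gap estimate from \cite{instability} it yields the right-hand inequality of the proposed estimate. For the reverse inequality, one treats the (bc) and periodic/antiperiodic characteristic equations as an effective $2\times 2$ linear system in the unknowns $\beta_n^+(z_n^*)$ and $\beta_n^-(z_n^*)$ and inverts it.

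The main obstacle is to verify that this linear system has a determinant bounded away from zero uniformly in $n$. This is precisely where the algebraic restrictions on $a,b,c,d$ enter: they are the conditions defining strict regularity and ensure that (bc) yields a characteristic equation independent from the periodic/antiperiodic one in the relevant asymptotic sense, so that the two equations genuinely separate $\beta_n^+$ from $\beta_n^-$. Once uniform invertibility is established, Theorem \ref{intro333} follows by applying the three sub-multiplicative-weight results of \cite{instability} to the asymptotically equivalent sequence $(\Delta_n^{bc})$: the direct implication needs only sub-multiplicativity of $\Omega$, while the two converses use precisely the stated growth hypotheses on $\log\Omega(n)/n$.
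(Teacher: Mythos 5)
Your overall reduction is the same as the paper's: both arguments hinge on the two-sided estimate $K_1(|\beta_n^-(z_n^*)|+|\beta_n^+(z_n^*)|)\leq\Delta_n^{bc}\leq K_2(|\beta_n^-(z_n^*)|+|\beta_n^+(z_n^*)|)$ (Theorem \ref{intro55555}), after which the three weighted implications follow verbatim from Theorem 66 of \cite{instability}. That part of your plan is fine. The divergence, and the problem, is in how you propose to prove the key estimate. The paper does not go through a characteristic determinant at all: it constructs a vector $G_n=s_nf_n+t_n\varphi_n$ in the two-dimensional periodic/antiperiodic root space $E_n$ that satisfies the boundary conditions $(bc)$ (Lemma \ref{existence}), pairs the identity $LG_n=\lambda_n^+G_n+t_n(\xi_nf_n-\gamma_n\varphi_n)$ against an eigenvector $\tilde g_n$ of the adjoint $(L_{bc})^*$ to get the exact relation (\ref{ilker6}) for $\mu_n-\lambda_n^+$, and then estimates every term by comparing $f_n,\varphi_n,\tilde g_n$ with their free counterparts via the Riesz projections (Propositions \ref{prop1}--\ref{prop3}).

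The genuine gap in your route is the assertion that the $(bc)$ characteristic determinant $\det(U_0+U_1Y(\pi,\lambda))$ can be rewritten on the disc around $n$ as an analytic equation $F^{bc}(z;\alpha_n(z),\beta_n^\pm(z);a,b,c,d)=0$ in the \emph{same} functionals $\alpha_n,\beta_n^\pm$. Those functionals come from the Lyapunov--Schmidt reduction onto the free periodic/antiperiodic eigenspace $E_n^0$; the analogous reduction for $L_{bc}$ lives on the one-dimensional space $\mathrm{Range}(P_{n,bc}^0)$ and produces different objects, so the claimed form of $F^{bc}$ is exactly the nontrivial content you would need to prove, not a starting point. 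Relatedly, your statement that $b+c=0$, $ad=1-b^2$ merely ``encode strict regularity'' undersells what these restrictions do: they force $sp(L_{bc}^0)=\mathbb{Z}$ (Theorem \ref{localizationgeneral}), which aligns $\mu_n^{bc}$ with the doublet $\lambda_n^\pm$, and they are precisely the condition $(1\pm b)(1\pm c)-ad=0$ that makes the linear system in Lemma \ref{existence} degenerate, i.e.\ that guarantees a $(bc)$-admissible vector inside $E_n$. Finally, the uniform non-degeneracy you defer (``determinant bounded away from zero uniformly in $n$'') is the actual crux; in the paper it corresponds to Lemma \ref{nonzero}, where the limit constant is computed explicitly as $-2a/\sqrt{|a|^2+|1\mp b|^2}$ and is nonzero only because $a\neq0$, and to the case analysis in Lemma \ref{zordu1} and Proposition \ref{prop3} with the explicit threshold $M>\max\{4(|a|/|1\mp b|)^2,4(|1\mp b|/|a|)^2\}$. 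Without carrying out these computations (or their analogues in your determinant framework), the lower bound on $\Delta_n^{bc}$ is not established and the converse implications of Theorem \ref{intro333} do not follow.
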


\begin{Theorem}\label{intro4444}
If $\ V\in L^2([0,\pi])\times L^2([0,\pi])$, then the Dirac operator (\ref{intro00}) with periodic or antiperiodic boundary conditions has the Riesz basis property if and only if
$$\sup\limits_{\gamma_n\neq0}\frac{|\delta_n^{bc}|}{|\gamma_n|}<\infty$$
holds, where $\delta_n^{bc}=\lambda^+_n-\mu_n^{bc}.$
\end{Theorem}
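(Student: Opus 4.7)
The strategy is to reduce Theorem \ref{intro4444} to the Djakov--Mityagin equivalence $(a)\Leftrightarrow(b)$ recalled in the introduction. Since the Riesz basis property for periodic/antiperiodic boundary conditions is equivalent to
$$0<\liminf_{\gamma_n\neq 0}\frac{|\beta_n^-(z_n^*)|}{|\beta_n^+(z_n^*)|}\leq\limsup_{\gamma_n\neq 0}\frac{|\beta_n^-(z_n^*)|}{|\beta_n^+(z_n^*)|}<\infty,\qquad(b)$$
it suffices to prove that (b) is equivalent to $\sup_{\gamma_n\neq 0}|\delta_n^{bc}|/|\gamma_n|<\infty$.

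The first step is to produce, for the boundary condition (bc), an analogue of the Dirichlet characteristic equation exploited in \cite{criteria}. Substituting the Floquet fundamental matrix into (\ref{intro11}) and setting the corresponding $2\times 2$ determinant to zero, then expressing its entries in the functionals $\alpha_n(z),\beta_n^{\pm}(z)$ of \cite{instability}, one obtains, for $|n|$ sufficiently large, that $\mu_n^{bc}=n+z_n^{bc}$ is the unique root, inside the disc of radius $1/4$ about $0$, of an equation of the form
$$z-\alpha_n(z)=A_1\,\beta_n^+(z)+A_2\,\beta_n^-(z)+R_n(z),$$
where $A_1,A_2$ are explicit constants depending only on $a,b,c,d$ and $R_n(z)=o(|\beta_n^+(z)|+|\beta_n^-(z)|)$ uniformly on the disc. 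The restrictions (\ref{intro22}) together with $ad\neq 0$ are used precisely to secure that both $A_1\neq 0$ and $A_2\neq 0$; this nondegeneracy is where strict regularity enters in an essential way.

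Next, since $\lambda_n^{+}=n+z_n^{+}$ is characterized by $(z-\alpha_n(z))^2=\beta_n^+(z)\beta_n^-(z)$ with $z_n^+-\alpha_n(z_n^+)=\sqrt{\beta_n^+(z_n^+)\beta_n^-(z_n^+)}$, subtracting this equation from the one for $z_n^{bc}$ on the common small disc and applying a standard implicit-function argument yields
$$\delta_n^{bc}=\lambda_n^+-\mu_n^{bc}=(1+o(1))\Bigl[\sqrt{\beta_n^+(z_n^*)\beta_n^-(z_n^*)}-A_1\,\beta_n^+(z_n^*)-A_2\,\beta_n^-(z_n^*)\Bigr].$$
Combined with the two-sided bound $|\gamma_n|\asymp 2|\beta_n^+(z_n^*)\beta_n^-(z_n^*)|^{1/2}$ contained in Theorem 66 of \cite{instability}, the ratio $|\delta_n^{bc}|/|\gamma_n|$ becomes comparable to
$$\Bigl|\,1-A_1\sqrt{\beta_n^+(z_n^*)/\beta_n^-(z_n^*)}-A_2\sqrt{\beta_n^-(z_n^*)/\beta_n^+(z_n^*)}\,\Bigr|.$$
With $A_1,A_2$ both nonzero, an unbounded ratio $|\beta_n^+|/|\beta_n^-|$ (or $|\beta_n^-|/|\beta_n^+|$) would make exactly one of the two square roots blow up while the other tends to $0$, forcing this expression to be unbounded. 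Hence boundedness of the expression along the subsequence $\gamma_n\neq 0$ is equivalent to (b); the converse direction is immediate, since under (b) each square root stays uniformly bounded.

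The principal obstacle is the first step: producing the bc-characteristic equation in the coordinates $(\alpha_n,\beta_n^{\pm})$ with explicit nonvanishing coefficients $A_1,A_2$ and a remainder $R_n$ that is uniformly $o(|\beta_n^+|+|\beta_n^-|)$. Once this structural identity is in hand, the remaining algebraic manipulation follows the template of \cite{criteria}. A secondary, easier technicality is to verify that the supremum is taken on a well-defined set and that the asymptotic comparison above is uniform in $n$, so that finitely many small indices do not affect the equivalence.
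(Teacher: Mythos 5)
Your overall strategy---reduce to the Djakov--Mityagin equivalence and compare $\delta_n^{bc}$ directly with $\beta_n^{\pm}$---is plausible, but it is not the paper's route and, as written, it has two genuine gaps. First, the entire technical content of the theorem is concentrated in your unproven first step: the existence of a characteristic equation $z-\alpha_n(z)=A_1\beta_n^+(z)+A_2\beta_n^-(z)+R_n(z)$ for $\mu_n^{bc}$ with \emph{both} $A_1\neq0$ and $A_2\neq0$ and with $R_n(z)=o(|\beta_n^+(z)|+|\beta_n^-(z)|)$ uniformly. You yourself call this ``the principal obstacle'' and then assume it; nothing in the proposal establishes it, and verifying the nonvanishing of $A_1,A_2$ under the restrictions (\ref{intro22}) with $ad\neq 0$ is precisely where the real work lies (in the paper this role is played by Lemma \ref{existence}, the identity (\ref{ilker6}), and Propositions \ref{prop1}--\ref{prop3}). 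Second, the comparison $|\gamma_n|\asymp 2|\beta_n^+(z_n^*)\beta_n^-(z_n^*)|^{1/2}$ is \emph{not} contained in Theorem 66 of \cite{instability}; that theorem states $|\gamma_n|+|\delta_n^{Dir}|\asymp|\beta_n^+(z_n^*)|+|\beta_n^-(z_n^*)|$. What is actually available (Lemma 49 of \cite{instability}) is a relation of the form $|\gamma_n|=2|\beta_n^+\beta_n^-|^{1/2}$ up to an \emph{additive} error $o(|\beta_n^+|+|\beta_n^-|)$, and the two-sided multiplicative comparison you invoke fails exactly when the ratio $|\beta_n^+|/|\beta_n^-|$ is unbounded---which is the regime your argument must control. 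Hence the displayed claim that $|\delta_n^{bc}|/|\gamma_n|$ is ``comparable to'' $\bigl|1-A_1\sqrt{\beta_n^+/\beta_n^-}-A_2\sqrt{\beta_n^-/\beta_n^+}\bigr|$ is unjustified where it matters; the blow-up direction can be salvaged with additive error bounds and a case analysis, but not in the form you state.

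For comparison, the paper never writes a characteristic equation for $\mu_n^{bc}$. It constructs a vector $G_n=s_nf_n+t_n\varphi_n$ in the periodic/antiperiodic root space $E_n$ satisfying the boundary conditions $bc$ (Lemma \ref{existence}, which is where (\ref{intro22}) enters), derives the identity (\ref{ilker6}) by pairing against an adjoint eigenvector of $L_{bc}$, and from it obtains the two-sided bound of Theorem \ref{intro55555}, namely $|\gamma_n|+|\delta_n^{bc}|\asymp|\beta_n^+(z_n^*)|+|\beta_n^-(z_n^*)|$, via Propositions \ref{prop1}, \ref{prop2} and \ref{prop3}. Combining this with Theorem 66 of \cite{instability} gives $1+|\delta_n^{bc}|/|\gamma_n|\asymp 1+|\delta_n^{Dir}|/|\gamma_n|$ on $\{\gamma_n\neq0\}$, so Theorem \ref{intro4444} follows from the equivalence (a)$\Leftrightarrow$(c) of \cite{criteria} rather than from (a)$\Leftrightarrow$(b). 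If you wish to pursue your route, you must actually derive the $bc$-characteristic equation with explicit nonzero coefficients and replace the product-square-root comparison by the dichotomy (ratio bounded versus unbounded) that the paper handles in Propositions \ref{prop2} and \ref{prop3}.
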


Primarily, the following theorem is proven as a generalization of Theorem 66 in \cite{instability}.

\begin{Theorem}\label{intro55555}
For $n\in\mathbb{Z}$ with large enough $|n|$, there are constants $K_1>0$ and $K_2>0,$ such that
$$K_1(|\beta_n^-(z_n^*)|+|\beta_n^+(z_n^*)|)\leq|\lambda^+_n-\lambda_n^-|+|\mu_n^{bc}-\lambda_n^+|\leq K_2(|\beta_n^-(z_n^*)|+|\beta_n^+(z_n^*)|).$$
\end{Theorem}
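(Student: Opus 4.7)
The plan is to extend the argument of Theorem 66 in \cite{instability}, which treats the Dirichlet case, to the strictly regular boundary condition $(bc)$. The term $|\gamma_n|=|\lambda_n^+-\lambda_n^-|$ is already sandwiched between constant multiples of $|\beta_n^+(z_n^*)|+|\beta_n^-(z_n^*)|$ by that theorem, so the task reduces to proving an analogous two-sided comparison for $|\delta_n^{bc}|=|\mu_n^{bc}-\lambda_n^+|$ using the \emph{same} functionals $\alpha_n(z)$ and $\beta_n^\pm(z)$.

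The first step is to derive, for $|n|$ sufficiently large, a characteristic equation $F_n^{bc}(z)=0$ whose unique small root $z_n^{bc}$ gives $\mu_n^{bc}=n+z_n^{bc}$; this is the analogue of Lemma 21 of \cite{instability}. The derivation should use the same Lyapunov--Schmidt type reduction that defines $\alpha_n$ and $\beta_n^\pm$: insert the $(bc)$ conditions (\ref{intro11}) into the Fourier expansion of the Dirac solution, project onto the two-dimensional subspace spanned by $e^{\pm 2inx}$, and solve the high-frequency block perturbatively. I expect the resulting function to admit an expansion of the form
\begin{equation*}
F_n^{bc}(z)=\alpha_n(z)+A_n\,\beta_n^+(z)+B_n\,\beta_n^-(z)+R_n(z),
\end{equation*}
where $A_n$ and $B_n$ depend only on $a,b,c,d$ and the parity of $n$, and $R_n(z)$ is negligible compared with $|\beta_n^+(z)|+|\beta_n^-(z)|$ on a small disc around $z_n^*$.

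Given this expansion, the upper bound in the theorem follows from a Rouch\'{e} or implicit-function argument. Since $\lambda_n^\pm=n+z_n^\pm$ solves $\alpha_n(z)^2=\beta_n^+(z)\beta_n^-(z)$ by Lemma 21 of \cite{instability}, comparing this equation with $F_n^{bc}(z)=0$ inside a disc of radius $\asymp|\beta_n^+(z_n^*)|+|\beta_n^-(z_n^*)|$ centered at $z_n^*$ yields $|z_n^{bc}-z_n^*|\leq C(|\beta_n^+(z_n^*)|+|\beta_n^-(z_n^*)|)$. Combined with $|z_n^\pm-z_n^*|=\tfrac{1}{2}|\gamma_n|$ and the upper half of Theorem 66 of \cite{instability}, the desired upper estimate on $|\delta_n^{bc}|$ falls out.

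The lower bound is the main obstacle, since $|\delta_n^{bc}|$ is not a priori comparable to $|\delta_n^{Dir}|$, so the lower half of Theorem 66 in \cite{instability} cannot be invoked directly. I would split into two cases according to the ratio $|\beta_n^+(z_n^*)|/|\beta_n^-(z_n^*)|$. When the two are comparable, Theorem 66 already provides $|\gamma_n|\geq c(|\beta_n^+(z_n^*)|+|\beta_n^-(z_n^*)|)$ and nothing more is needed. When one dominates the other, the leading terms of $F_n^{bc}$ linearize to $z_n^{bc}\approx A_n\beta_n^+(z_n^*)+B_n\beta_n^-(z_n^*)$, and the required estimate reduces to showing that the coefficient ($A_n$ or $B_n$) paired with the dominant $\beta_n^\pm$ is uniformly bounded away from zero. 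Verifying this non-degeneracy is the delicate point; it should follow from the strict regularity hypotheses $b+c=0$, $ad=1-b^2$, $ad\neq 0$, which are precisely the conditions ruling out the degenerate cases in which the linear contribution of $\beta_n^\pm$ would vanish.
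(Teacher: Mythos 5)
Your overall strategy --- reducing the theorem to a two-sided comparison of $|\delta_n^{bc}|$ with $|\beta_n^+(z_n^*)|+|\beta_n^-(z_n^*)|$, and splitting the lower bound into the regime where $|\beta_n^+|$ and $|\beta_n^-|$ are comparable versus the regime where one dominates --- matches the architecture of the paper's argument (Propositions \ref{prop1}, \ref{prop2} and \ref{prop3} play exactly these roles). But the load-bearing step of your proof, the expansion $F_n^{bc}(z)=\alpha_n(z)+A_n\beta_n^+(z)+B_n\beta_n^-(z)+R_n(z)$ with $R_n$ negligible \emph{relative to} $|\beta_n^+(z)|+|\beta_n^-(z)|$, is only conjectured (``I expect\ldots''), and it is not a routine consequence of the Lyapunov--Schmidt reduction of \cite{instability}: that reduction is built on the free periodic/antiperiodic operator and its two-dimensional eigenspaces $E_n^0$, whereas the free operator for $(bc)$ has one-dimensional eigenspaces, so there is no a priori reason the $(bc)$ characteristic function is expressible through the same $\alpha_n,\beta_n^{\pm}$ with a remainder that is small compared to $|\beta_n^+|+|\beta_n^-|$ (absolute smallness, i.e.\ $o(1)$, would not suffice for either inequality). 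Likewise the non-degeneracy of the coefficients $A_n,B_n$, which you correctly identify as the delicate point, is precisely where all the quantitative content lies, and it is left unproved. As written, both halves of the estimate rest on these two unestablished claims, so there is a genuine gap.

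For comparison, the paper avoids the characteristic-equation route entirely. Lemma \ref{existence} produces a vector $G_n=s_nf_n+t_n\varphi_n$ inside the periodic root space $E_n$ that \emph{satisfies the boundary conditions} $(bc)$ --- this is exactly where the restrictions $b+c=0$ and $ad=1-b^2$ enter --- and pairing equation (\ref{main}) with the adjoint eigenvector $\tilde g_n$ yields the identity (\ref{ilker6}), $(\mu_n-\lambda_n^+)\langle G_n,\tilde g_n\rangle=t_n(\xi_n\langle f_n,\tilde g_n\rangle-\gamma_n\langle\varphi_n,\tilde g_n\rangle)$, which replaces your $F_n^{bc}$. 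The non-degeneracy you need is then proved concretely: $\tau_n^{-1}\langle G_n,\tilde g_n\rangle$ is bounded away from zero (Lemma \ref{nonzero}, using $a\neq0$), and in the dominant-$\beta$ regime the quantities $|t_n|$ and $|\langle f_n,\tilde g_n\rangle|$ are bounded below (Lemmas \ref{lem1}--\ref{zordu1} and Proposition \ref{prop3}), with the required pointwise control of $f_n(0)$ and $\varphi_n(0)$ supplied by Batal's method in Proposition \ref{approx}. If you wish to pursue your route you would need to actually carry out the reduction for $(bc)$ and compute $A_n$ and $B_n$ explicitly; the eigenvector argument is what the paper does and is likely the shorter path.
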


Theorem \ref{intro333} and Theorem \ref{intro4444} are not proven directly. However, their proofs are reduced to the proofs of Theorem 66 in \cite{instability} and Theorem 24 in \cite{criteria}, respectively, in which we make use of Theorem \ref{intro55555} that gives the asymptotic equivalence of $|\beta_n^-(z_n^*)|+|\beta_n^+(z_n^*)|$ and $|\lambda^+_n-\lambda_n^-|+|\mu_n^{bc}-\lambda_n^+|$.

Some of the necessary estimates in the proof of Theorem \ref{intro55555} are done by a method discovered by Ahmet Batal \cite {ahmet, ahmetarxive} in the context of Schr\"odinger operators.

\section{Preliminaries}

A general boundary condition for the operator $L$ is given by
\begin{eqnarray*}
a_1y_1(0)+b_1y_1(\pi)+a_2y_2(0)+b_2y_2(\pi)=0,\\
c_1y_1(0)+d_1y_1(\pi)+c_2y_2(0)+d_2y_2(\pi)=0,
\end{eqnarray*}
where $a_i,b_i,c_i,d_i$ $(i=1,2)$ are complex numbers. 

Let $A_{ij}$ denote the square matrix whose first and second columns are the $i^{th}$ and $j^{th}$ columns of the matrix
$$\begin{bmatrix}a_1&b_1&a_2&b_2\\c_1&d_1&c_2&d_2\end{bmatrix}$$
respectively and let $|A_{ij}|$ be the determinant of $A_{ij}$. If $|A_{14}|\neq0$
and $|A_{23}|\neq0$, then we say that the boundary condition given
above is $regular$, if additionally
$(|A_{13}|+|A_{24}|)^2\neq4|A_{14}||A_{23}|$ holds, it is called
$strictly\ regular$.


Description of a family of special boundary conditions: Consider
matrices of the form
\begin{equation}\label{general}
A=\begin{bmatrix}1&b&a&0\\0&d&c&1\end{bmatrix},
\end{equation}
where $a,b,c,d$ are complex numbers.
For every such matrix, the corresponding boundary condition $bc(A)$ is given by

\begin{equation}\label{ilker2}
  \begin{split}
    y_1(0)+by_1(\pi)+ay_2(0)=0,\\
    dy_1(\pi)+cy_2(0)+y_2(\pi)=0.
  \end{split}
\end{equation}

We consider the family of all such boundary conditions that satisfy
also
\begin{equation}\label{ilker3}
b+c=0,\quad ad=1-b^2
\end{equation}
with restriction $ad\neq0.$ 

From now on, we denote by $bc$ the boundary conditions given by (\ref{ilker2}) with restrictions (\ref{ilker3}). Observe that $bc$ is strictly regular.

\section{Localization of the spectra}
We give the localization of the spectra of Dirac operator subject to three types of boundary conditions which are the general boundary conditions defined by (\ref{ilker2}) and (\ref{ilker3}), periodic and antiperiodic boundary conditions defined as follows
 \begin{align*}
&\text{Periodic}\;(bc=Per^+): \quad\quad\quad y(0) = y (\pi), \;\;\;\; \text{i.e. }y_1(0)=y_1(\pi) \text{ and }y_2(0)=y_2(\pi)  ;\nonumber \\
&\text{Antiperiodic}\;(bc=Per^-): \quad y(0) = -y (\pi), \;\; \text{i.e. }y_1(0)=-y_1(\pi) \text{ and }y_2(0)=-y_2(\pi).\nonumber
\end{align*}

We denote by $L$ the Dirac operator subject to $Per^{\pm}$ and denote by $L_{bc}$ the Dirac operator with general boundary conditions $bc=bc(A)$, where $A$ is given by (\ref{general}), (\ref{ilker2}) and (\ref{ilker3}). We also denote by $L^0$ and $L_{bc}^0$ the corresponding free Dirac operators subject to $Per^{\pm}$ and $(bc)$ when $V=0.$

We consider $L$ in the domain $dom(L)$, which consists of all absolutely continuous functions $y$ such that $y'\in L^2([0,\pi])\times L^2([0,\pi])$ and $y$ satisfy $(Per^{\pm})$. Furthermore, we consider $L_{bc}$ in the domain $dom(L_{bc})$, which consists of all absolutely continuous functions $y$ such that $y'\in L^2([0,\pi])\times L^2([0,\pi])$ and $y$ satisfies $bc.$


\begin{Theorem}\label{localizationgeneral}
The spectrum of the free operator $L_{bc}^0$ is
given by $sp(L_{bc}^0)=\mathbb{Z}.$ Moreover, for $n\in\mathbb{Z}$ with large enough $|n|$, the disc $D_n=\{z\in \mathbb{C}:|z-n|<1/2\}$ contains one simple
eigenvalue $\mu_n=\mu_n(bc)$ of the operator $L_{bc}.$
\end{Theorem}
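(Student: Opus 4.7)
The plan is to handle the two claims in sequence: an explicit computation for the free operator, then a Rouch\'e-type perturbation argument for $L_{bc}$. For the spectrum of $L_{bc}^0$, I would solve $L^0 y = \lambda y$ directly; the system decouples into $iy_1' = \lambda y_1$ and $-iy_2' = \lambda y_2$, yielding $y_1(x) = c_1 e^{-i\lambda x}$, $y_2(x) = c_2 e^{i\lambda x}$. Substituting into (\ref{ilker2}) produces a $2 \times 2$ homogeneous system whose determinant expands to $(b+c) + e^{i\lambda\pi} + (bc-ad)e^{-i\lambda\pi}$. The restrictions (\ref{ilker3}) give $b + c = 0$ and $bc - ad = -b^2 - (1-b^2) = -1$, so this determinant collapses to $2i\sin(\lambda\pi)$, whose zero set is precisely $\mathbb{Z}$.

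For the perturbed operator, let $E(x,\lambda)$ be the fundamental matrix of $Ly = \lambda y$ with $E(0,\lambda) = I$, and let $\chi(\lambda)$ denote the determinant obtained by plugging $y = E(\cdot,\lambda)(c_1,c_2)^T$ into the boundary conditions. The eigenvalues of $L_{bc}$ are exactly the zeros of $\chi$, and for $V = 0$ one recovers $\chi_0(\lambda) = 2i\sin(\lambda\pi)$. On $\partial D_n$, setting $\lambda = n + z$ with $z = \tfrac{1}{2}e^{i\theta}$, the identity $|\sin(\pi z)|^2 = \sin^2(\tfrac{\pi}{2}\cos\theta) + \sinh^2(\tfrac{\pi}{2}\sin\theta)$ attains its minimum $1$ at $z = \pm 1/2$, so $|\chi_0(\lambda)| \geq 2$ there. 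By Rouch\'e it therefore suffices to prove
$$\lim_{|n|\to\infty}\sup_{\lambda\in\partial D_n}|\chi(\lambda) - \chi_0(\lambda)| = 0,$$
which, since $\chi - \chi_0$ is polynomial in the entries of $E(\pi,\lambda) - E_0(\pi,\lambda)$, follows from the analogous uniform convergence of $E(\pi,\lambda)$ to $E_0(\pi,\lambda)$ across these circles. Simplicity of the unique zero in $D_n$ then follows by applying the same estimate to $\chi' - \chi_0'$, since $\chi_0$ has a simple zero at $\lambda = n$.

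The uniform convergence $E(\pi,\lambda) \to E_0(\pi,\lambda)$ on $\partial D_n$ is the main technical point. I would iterate the Volterra integral equation
$$E(x,\lambda) = E_0(x,\lambda) + i\int_0^x E_0(x-t,\lambda)\begin{pmatrix}1 & 0\\ 0 & -1\end{pmatrix}V(t)E(t,\lambda)\,dt$$
into a Neumann-type series. On $\partial D_n$ one has $|\mathrm{Im}\,\lambda| \leq 1/2$, so $\|E_0(\cdot,\lambda)\|$ is bounded uniformly in $n$, and Gronwall bounds the tail of the series after finitely many terms uniformly in $n$. Each remaining term reduces to oscillatory integrals $\int_0^\pi e^{\pm 2i\lambda t} f(t)\,dt$ with $f \in L^1([0,\pi])$ formed from products of $\mathcal{P}$ and $\mathcal{Q}$; approximating $f$ in $L^1$ by trigonometric polynomials and using the resulting Riemann--Lebesgue decay (uniform for $|\mathrm{Im}\,\lambda| \leq 1/2$) gives the desired smallness. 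The hard part is precisely this step: one must control all orders of the Volterra expansion simultaneously under only $L^2$ regularity of the potential, so that the cross-terms involving products of Fourier coefficients of $\mathcal{P}$ and $\mathcal{Q}$ do not spoil the convergence.
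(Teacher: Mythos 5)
Your computation for $sp(L^0_{bc})=\mathbb{Z}$ is essentially the paper's: both substitute $y=(\xi e^{-i\lambda x},\zeta e^{i\lambda x})^T$ into the boundary conditions and use $b+c=0$, $bc-ad=-1$ to reduce the characteristic determinant; the paper phrases it as $z^2=1$ for $z=e^{i\pi\lambda}$, you as $2i\sin(\pi\lambda)=0$, which is the same equation. Where you genuinely diverge is the second claim: the paper disposes of it in one sentence by observing that $bc$ is strictly regular and citing Theorem 5.3 of \cite{unconditional}, whereas you reconstruct a self-contained proof via the characteristic determinant $\chi$, the lower bound $|\chi_0|\geq 2$ on $\partial D_n$ (your minimization of $\sin^2(\tfrac{\pi}{2}\cos\theta)+\sinh^2(\tfrac{\pi}{2}\sin\theta)$ is correct, e.g.\ since $\sin^2(\tfrac{\pi}{2}u)\geq u^2$ on $[-1,1]$ and $\sinh^2 t\geq t^2$), Rouch\'e's theorem, and a Volterra/Neumann-series argument for the uniform convergence $E(\pi,\lambda)\to E_0(\pi,\lambda)$ on $\partial D_n$. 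That route is sound, and note that simplicity already follows from Rouch\'e's multiplicity count, so your extra step with $\chi'-\chi_0'$ is not needed. What your approach buys is independence from the external reference; what it costs is that the one genuinely hard step --- uniform Riemann--Lebesgue-type decay of all orders of the transfer-matrix expansion under only $L^2$ regularity of $\mathcal{P},\mathcal{Q}$ --- is left as a sketch, and that step is precisely the content that the cited theorem packages. The paper's choice of citation is also structurally motivated: the same machinery from \cite{unconditional} (norm convergence of the Riesz projections $P_{n,bc^*}-P^0_{n,bc^*}$ for strictly regular conditions) is reused later, so localization and projection convergence come from a single source rather than being reproved ad hoc.
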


\begin{proof}
First, we consider the equation

\begin{equation*}
L_{bc}^0\ y=\lambda y,\quad y = \begin{pmatrix} y_1\\y_2
\end{pmatrix}.
\end{equation*}

Its solution is of the form
\begin{equation*}
y=\begin{pmatrix} \xi e^{-i\lambda x} \\ \zeta e^{i\lambda x}\end{pmatrix}.
\end{equation*}

To satisfy the general boundary conditions given by (\ref{ilker2}), $(\xi,\zeta)$ must be a solution of the linear system 

\begin{equation}\label{ariza}
\begin{bmatrix}1+bz^{-1}&a\\dz^{-1}&c+z\end{bmatrix}\begin{bmatrix}\xi\\ \zeta\end{bmatrix}=\begin{bmatrix}0\\ 0\end{bmatrix},
\end{equation}
where $z=e^{i\pi\lambda}.$

In order to have a non-zero solution $(\xi,\zeta)$ the determinant of the matrix in (\ref{ariza})
has to be zero. So, we obtain
\begin{equation*}
z^2+(b+c)z+bc-ad=0.
\end{equation*}

Together with the restrictions (\ref{ilker3}), we have $z^2=1$ which gives $z=\mp1.$ Hence, one may conclude that $sp(L_{bc}^0)=\mathbb{Z}$ since the only solutions to the equation $\mp1=e^{i\pi\lambda}$ are integers. The second result comes from Theorem 5.3 of \cite{unconditional} since $bc$ is strictly regular.

\end{proof}

\section{The eigenvectors of the free operators}


The adjoint boundary conditions $(bc^*)$ are given by the matrix (see Lemma 3.4 in \cite{unconditional})

\begin{equation}\label{adjointbc}
A^*=\begin{bmatrix}1&\tilde{b}&\tilde{a}&0\\0&\tilde{d}&\tilde{c}&1\end{bmatrix}=\begin{bmatrix}1&-\overline{c}&\overline{d}&0\\0&\overline{a}&-\overline{b}&1\end{bmatrix},
\end{equation}
due to the restrictions (\ref{ilker3}), where
\begin{equation*}
\begin{bmatrix}\tilde{b}&\tilde{a}\\\tilde{d}&\tilde{c}\end{bmatrix}=\bigg(\begin{bmatrix}b&a\\d&c\end{bmatrix}^{-1}\bigg)^*.
\end{equation*}





Furthermore, we observe that the adjoint boundary conditions are also in the family of general boundary conditions given by (\ref{ilker2}) and (\ref{ilker3}) since $$\tilde{b}+\tilde{c}=0\quad\text{and}\quad\tilde{a}\tilde{d}=1-\tilde{b}^2.$$ Hence, by Theorem \ref{localizationgeneral} we get that the eigenvalues of $(L_{bc}^0)^*$ are also integers. 



Hence, all eigenfunctions corresponding to an eigenvalue $n\in\mathbb{Z}$ are

\begin{equation}
\label{ }
\begin{pmatrix}
A_ne^{-inx}\\B_ne^{inx}
\end{pmatrix},
\end{equation}
where

\begin{equation}\label{onedimension1}
A_n=\frac{1-(-1)^n\overline{b}}{\sqrt{|a|^2+|1-(-1)^n\overline{b}|^2}}\ ,\quad B_n=\frac{-\overline{a}}{\sqrt{|a|^2+|1-(-1)^n \overline{b}|^2}}
\end{equation}
so that $|A_n|^2+|B_n|^2=1.$

Since the adjoint boundary conditions $bc^*$ are strictly regular,
$$\lim_{n\to\infty}\|P_{n,bc^*}-P_{n,bc^*}^0\|=0,$$
due to Theorem 6.1 in \cite{unconditional}, where 
\begin{equation*}
P_{n,bc^*}=\int_{\partial D_n}(\lambda-L_{bc^*})^{-1}d\lambda, \quad P_{n,bc^*}^0=\int_{\partial D_n}(\lambda-L_{bc^*}^0)^{-1}d\lambda
\end{equation*}
are the Cauchy-Riesz projections, $D_n=\{z\in \mathbb{C}:|z-n|<1/2\}$ and $\partial D_n$ is the boundary of $D_n$. Notice that $L_{bc^*}^0=(L_{bc}^0)^*$ and $L_{bc^*}=(L_{bc})^*$ (see Lemma 3.4 in \cite{unconditional}).

Similarly, we have the Cauchy-Riesz projections $P_n$ and $P_n^0$ associated with Dirac operator $L$ with periodic boundary conditions if $n$ is even and antiperiodic boundary conditions if $n$ is odd, where $dim(P_n)=dim(P_n^0)=2$ due to Theorem 18 in \cite{instability}. Furthermore,
$$\lim_{n\to\infty}\|P_n-P_n^0\|=0$$
by Proposition 19 in \cite{instability} and for large enough $|n|$, the operator $L=L^0+V$ has two eigenvalues $\lambda_n^+$ and $\lambda_n^-$ (which are periodic for even $n$ and antiperiodic for odd $n$) such that $|\lambda_n^{\pm}-n|\leq1/4$ as a result of Theorem 17 in \cite{instability}.

Furthermore, the spectrum of the free operator $L^0$ subject to periodic boundary condition is $2\mathbb{Z}$ and each $n\in2\mathbb{Z}$ is a double eigenvalue and the corresponding eigenvectors are
\begin{equation}\label{offf1}e_n^1(x)=\begin{pmatrix} e^{-inx}\\0
\end{pmatrix},\quad
e_n^2(x)=\begin{pmatrix} 0\\e^{inx}
\end{pmatrix}.
\end{equation}

Similarly, the spectrum of the free operator $L^0$ subject to antiperiodic boundary condition is $2\mathbb{Z}+1$ and each $n\in2\mathbb{Z}+1$ is a double eigenvalue and the corresponding eigenvectors are obtained by the same formulae (\ref{offf1}). So, we may write $E_n^0=Span\{e_n^1,e_n^2\}$ for all $n\in\mathbb{Z}.$ Moreover, we may also write
$E_n=\text{Range}(P_n)$ and $E_n^0=\text{Range}(P_n^0)$
for the eigenspaces of the operators $L$ and $L^0,$ respectively.

\section{Estimates for $|\mu_n-\lambda_n^+|$}

The Dirac operator $L=L^0+V$ has two eigenvalues $\lambda^+_n$ and $\lambda^-_n$ (periodic for even $n$ and antiperiodic for odd $n$) in the disc centered at $n\in \mathbb{Z}$ of radius 1/4 for large enough $|n|$ (Theorem 17 and Theorem 18 in \cite{instability}). We denote by $\lambda_n^+$ the eigenvalue with larger real part or the one with larger imaginary part if the real parts are equal and we put $\gamma_n=\lambda^+_n-\lambda^-_n$.

From Lemma 59 in \cite{instability}, for sufficiently large $|n|$, there is a pair of vectors $f_n,\varphi_n \in E_n$ such that
\begin{enumerate}
  \item $\|f_n\|=1,\|\varphi_n\|=1, \langle f_n,\varphi_n \rangle=0$
  \item $Lf_n=\lambda_n^+f_n$
  \item $L\varphi_n=\lambda_n^+\varphi_n -\gamma_n\varphi_n+\xi_n f_n$
\end{enumerate}
for some sequence $(\xi_n) \in \mathbb{C}.$

Now, let $\ell_0$ and $\ell_1$ be the functionals from
$C([0,\pi])\times C([0,\pi])$ to $\mathbb{C}$, defined as

\begin{eqnarray*}
\ell_0(s)&=&s_1(0)+bs_1(\pi)+as_2(0),\\
\ell_1(s)&=&ds_1(\pi)+cs_2(0)+s_2(\pi),
\end{eqnarray*}
where
$$s(x)=\begin{pmatrix}s_1(x)\\s_2(x)\end{pmatrix}.$$

We start with a crucial lemma which gives us the restrictions on those regular boundary conditions by which Dirichlet condition in \cite{instability} could be replaced. Furthermore, this will lead to an equation that determines the way we estimate $|\mu_n-\lambda_n^+|.$ 

\begin{Lemma}\label{existence}
For large enough $|n|$ there is vector $G_n\in E_n$ of the form
\begin{equation*}
G_n=s_nf_n+t_n\varphi_n,\quad \|G_n\|=|s_n|^2+|t_n|^2=1
\end{equation*}
such that
\begin{equation*}
\ell_0(G_n)=0,\quad \ell_1(G_n)=0
\end{equation*}
if the general boundary conditions (\ref{ilker2}) satisfy (\ref{ilker3}).
\end{Lemma}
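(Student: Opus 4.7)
The plan is to reduce the existence of such a $G_n$ to the vanishing of a single $2\times 2$ determinant, and then to show that this vanishing is forced by the algebraic restrictions (\ref{ilker3}). Writing $G_n=s_nf_n+t_n\varphi_n$, the conditions $\ell_0(G_n)=0$ and $\ell_1(G_n)=0$ become a homogeneous linear system for $(s_n,t_n)$ with coefficient matrix
$$
M_n \;=\; \begin{pmatrix} \ell_0(f_n) & \ell_0(\varphi_n) \\ \ell_1(f_n) & \ell_1(\varphi_n) \end{pmatrix},
$$
and a nontrivial solution, which after normalization satisfies $|s_n|^2+|t_n|^2=1$ (using $\|f_n\|=\|\varphi_n\|=1$ and $\langle f_n,\varphi_n\rangle=0$ from property (1) of the $f_n,\varphi_n$), exists precisely when $\det M_n=0$.

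The heart of the argument is therefore to check that $\det M_n=0$ for all sufficiently large $|n|$. The key input is that $f_n,\varphi_n\in E_n\subset\mathrm{dom}(L)$, so they satisfy periodic (even $n$) or antiperiodic (odd $n$) boundary conditions; in particular $f_{n,j}(\pi)=(-1)^n f_{n,j}(0)$ and $\varphi_{n,j}(\pi)=(-1)^n \varphi_{n,j}(0)$ for $j=1,2$. Substituting these identities into the definitions of $\ell_0$ and $\ell_1$ yields the clean factorization
$$
M_n \;=\; \begin{pmatrix} 1+(-1)^n b & a \\ (-1)^n d & c+(-1)^n \end{pmatrix}\begin{pmatrix} f_{n,1}(0) & \varphi_{n,1}(0) \\ f_{n,2}(0) & \varphi_{n,2}(0) \end{pmatrix}.
$$
The determinant of the left factor equals $(b+c)+(-1)^n(1+bc-ad)$; using $b+c=0$ and $ad=1-b^2$ from (\ref{ilker3}) one has $bc-ad=-b^2-(1-b^2)=-1$, so both $b+c$ and $1+bc-ad$ vanish and hence $\det M_n=0$ regardless of the parity of $n$.

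I do not expect a substantial obstacle: the computation is essentially an algebraic identity, and the only point requiring a little care is the boundary relation $f_n(\pi)=(-1)^n f_n(0)$ (and the same for $\varphi_n$). This follows because $P_n$ is the Riesz projection for $L$ subject to periodic (even $n$) or antiperiodic (odd $n$) boundary conditions, so $E_n=\mathrm{Range}(P_n)\subset\mathrm{dom}(L)$ inherits the corresponding boundary relation with sign $(-1)^n$. Once $\det M_n = 0$ is established, one simply picks a unit vector $(s_n,t_n)$ in the kernel of $M_n$ and sets $G_n = s_n f_n + t_n \varphi_n$, which gives the desired vector.
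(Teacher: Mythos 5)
Your proposal is correct and follows essentially the same route as the paper: reduce to a $2\times 2$ homogeneous system for $(s_n,t_n)$, use the periodic/antiperiodic boundary relations to express everything through values at $0$, and show the coefficient determinant vanishes because $(b+c)=0$ and $1+bc-ad=0$ under (\ref{ilker3}). Your explicit matrix factorization handling both parities at once via $(-1)^n$ is a slightly cleaner packaging of the paper's computation, which treats the even and odd cases separately, but the argument is the same.
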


\begin{proof}
It will be enough to prove that the system of linear equations
\begin{align}\label{newnew}
\begin{split}
\ell_0(s_nf_n+t_n\varphi_n)  =  0  \\
\ell_1(s_nf_n+t_n\varphi_n)  =  0
\end{split}
\end{align}
has a non-trivial solution if $b+c=0$ and $ad=1-b^2$ hold.

Now, the system can be written as follows
\begin{eqnarray*}
s_nf_n^1(0)+t_n\varphi_n^1(0)+b(s_nf_n^1(\pi)+t_n\varphi_n^1(\pi))+a(s_nf_n^2(0)+t_n\varphi_n^2(0)) & = & 0 \\
d(s_nf_n^1(\pi)+t_n\varphi_n^1(\pi))+c(s_nf_n^2(0)+t_n\varphi_n^2(0))+s_nf_n^2(\pi)+t_n\varphi_n^2(\pi) & = & 0, 
\end{eqnarray*}
where
$$f_n=\begin{pmatrix}f_n^1\\f_n^2\end{pmatrix}\text{ and }\quad \varphi_n=\begin{pmatrix}\varphi_n^1\\\varphi_n^2\end{pmatrix}.$$

Then, since $f_n$ and $\varphi_n$ satisfy periodic boundary conditions, we can reduce the above system to

\begin{equation}\label{farte}
\begin{pmatrix}
(1+b)f_n^1(0)+af_n^2(0)  & (1+b)\varphi_n^1(0)+a\varphi_n^2(0)\\
df_n^1(0)+(1+c)f_n^2(0)  & d\varphi_n^1(0)+(1+c)\varphi_n^2(0)      
\end{pmatrix}
\begin{pmatrix}
      s_n  \\
      t_n  
\end{pmatrix}=
\begin{pmatrix}
      0  \\
      0  
\end{pmatrix}.
\end{equation}
The restrictions (\ref{ilker3}) leads to the fact that $(1+b)(1+c)-ad=0$. Then,

\begin{equation*}
[ (1+b)(1+c)-ad].[f_n^1(0)\varphi_n^2(0)-f_n^2(0)\varphi_n^1(0)]=0,
\end{equation*}
which is the determinant of the matrix corresponding to the system (\ref{farte}).
In a similar way from the antiperiodic case of $f$ and $\varphi$, we get

\begin{equation*}
[ (1-b)(1-c)-ad].[f_n^1(0)\varphi_n^2(0)-f_n^2(0)\varphi_n^1(0)]=0,
\end{equation*}
which is the corresponding determinant. Hence, if $b+c=0$ and $ad=1-b^2$, then we have a non-trivial solution to the system (\ref{newnew}).
\end{proof}

\begin{Remark}The converse of Lemma \ref{existence} is also true, but we will not use this fact. In order to prove it, one may check that 
$$f_n^1(0)\varphi_n^2(0)-f_n^2(0)\varphi_n^1(0)\neq0$$
for large enough $|n|$ by making use of Remark (\ref{remarkx}) and (\ref{test1}).
\end{Remark}
As seen by the proof of the previous lemma, we can write $G_n$ as
\begin{equation}
\label{linear1}
G_n=\tau_n(\ell_0(\varphi_n)f_n-\ell_0(f_n)\varphi_n),
\end{equation}
where
$$\tau_n=\frac{1}{\sqrt{|\ell_0(\varphi_n)|^2+|\ell_0(f_n)|^2}}\cdot$$

We also write $G_n=s_nf_n+t_n\varphi_n,$ where $s_n=\tau_n\ell_0(\varphi_n)$ and $t_n=-\tau_n\ell_0(f_n).$

Now, since $G_n$ is in the domain of $L_{bc}$ and $L$, we can continue to write

\begin{eqnarray*}
L_{bc}G_n & = & LG_n=s_n.Lf_n+t_n.L\varphi_n=s_n\lambda_n^+f_n+t_n(\lambda_n^+\varphi_n-\gamma_n\varphi_n+\xi_n f_n) \\
& = & \lambda_n^+(s_nf_n+t_n\varphi_n)+t_n(\xi_n f_n-\gamma_n\varphi_n)=\lambda_n^+G_n+t_n(\xi_n f_n-\gamma_n\varphi_n).
\end{eqnarray*}

So, we have

\begin{equation}
\label{main}
LG_n =\lambda_n^+G_n+t_n(\xi_n f_n-\gamma_n\varphi_n).
\end{equation}

Let $\tilde{g}_n$ be a unit eigenvector of the adjoint operator $(L_{bc})^*$ corresponding to the eigenvalue $\overline{\mu_n},$ where $\mu_n$ is the eigenvalue of $L_{bc}$ in a circle with center $n$ and radius $1/4.$

Taking inner products of both sides of the equation (\ref{main}) by $\tilde{g}_n$ we obtain

\begin{equation}\label{six}
\langle LG_n,\tilde{g}_n\rangle=\lambda_n^+\langle
G_n,\tilde{g}_n\rangle+t_n(\xi_n \langle
f_n,\tilde{g}_n\rangle-\gamma_n\langle\varphi_n,\tilde{g}_n\rangle).
\end{equation}

We also have
\begin{equation}\label{seven}
\langle LG_n,\tilde{g}_n\rangle=\langle L_{bc}G_n,\tilde{g}_n\rangle=\langle G_n,(L_{bc})^*\tilde{g}_n\rangle=\langle G_n,\overline{\mu_n}\tilde{g}_n\rangle=\mu_n\langle G_n,\tilde{g}_n\rangle.
\end{equation}
The equality of (\ref{six}) and (\ref{seven}) leads to
\begin{equation}\label{ilker6}
(\mu_n-\lambda_n^+)\langle G_n,\tilde{g}_n\rangle=t_n(\xi_n \langle
f_n,\tilde{g}_n\rangle-\gamma_n\langle\varphi_n,\tilde{g}_n\rangle).
\end{equation}

The equation (\ref{ilker6}) is important that our proof of the estimation for $|\mu_n-\lambda_n^+|$ will be based on the approximations for each remaining term in (\ref{ilker6}).

Note that for large enough $|n|$, since $f_n\in E_n$ and $P_n$ is a projection onto $E_n$ we have $P_nf_n=f_n.$ So,
$$\|P_n^0f_n\|=\|P_nf_n-(P_n-P_n^0)f_n\|\geq\|f_n\|-\|P_n-P_n^0\|=1-\|P_n-P_n^0\|.$$
Since $\|P_n-P_n^0\|$ is sufficiently small we have that $P_n^0f_n\neq0.$

Now, we introduce notations for the projections of the eigenvectors of Dirac operator under periodic (or antiperiodic) boundary conditions and adjoint boundary conditions $(bc^*)$ given by (\ref{adjointbc}):

\begin{equation*}
f_n^0=\frac{P_n^0f_n}{\|P_n^0f_n\|},\quad\varphi_n^0=\frac{P_n^0\varphi_n}{\|P_n^0\varphi_n\|},\quad\tilde{g}_n^0=\frac{P_{n,bc^*}^0\tilde{g}_n}{\|P_{n,bc^*}^0\tilde{g}_n\|}\cdot
\end{equation*}

Also, we may put
\begin{equation*}
f_n^0=f_{n,1}^0e_n^1+f_{n,2}^0e_n^2\ , \quad \varphi_n^0=\varphi_{n,1}^0e_n^1+\varphi_{n,2}^0e_n^2\ .
\end{equation*}

\begin{Lemma}\label{remark1} In the above notations, for large enough $|n|$ we have
$$\|g_n-\tilde{g}_n^0\|\leq2\|P_{n,bc^*}-P_{n,bc^*}^0\|,\quad\|f_n-f_n^0\|\leq2\|P_n-P_n^0\|, \quad\|\varphi_n-\varphi_n^0\|\leq2\|P_n-P_n^0\|.$$
\end{Lemma}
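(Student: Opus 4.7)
The plan is to prove all three inequalities by a single template argument, since $f_n, \varphi_n$ lie in $E_n = \mathrm{Range}(P_n)$ and $\tilde g_n$ lies in $\mathrm{Range}(P_{n,bc^*})$, each is a unit vector, and the normalizations $f_n^0, \varphi_n^0, \tilde g_n^0$ are the unit-normalized projections onto the corresponding free-operator eigenspaces. I will write the argument for $f_n$; the other two cases are identical with $P_n$ replaced by $P_{n,bc^*}$.

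First I would exploit $f_n \in E_n$, so $P_n f_n = f_n$ and therefore
$$f_n - P_n^0 f_n = (P_n - P_n^0) f_n, \qquad \|f_n - P_n^0 f_n\| \le \|P_n - P_n^0\|.$$
From the reverse triangle inequality this also gives $\bigl|\|P_n^0 f_n\|-1\bigr|\le\|P_n-P_n^0\|$, and in particular $\|P_n^0 f_n\|\ne 0$ for $|n|$ large, so $f_n^0$ is well defined. Next I would split
$$\|f_n - f_n^0\|
= \left\|f_n - \frac{P_n^0 f_n}{\|P_n^0 f_n\|}\right\|
\le \|f_n - P_n^0 f_n\| + \left\|P_n^0 f_n - \frac{P_n^0 f_n}{\|P_n^0 f_n\|}\right\|.$$
The second term equals $\bigl|\|P_n^0 f_n\|-1\bigr|$, which by the previous display is also bounded by $\|P_n - P_n^0\|$. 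Adding the two contributions gives the claimed bound $2\|P_n - P_n^0\|$.

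The same two-step estimate applied to $\varphi_n \in E_n$ yields $\|\varphi_n - \varphi_n^0\| \le 2\|P_n - P_n^0\|$, and applied to $\tilde g_n \in \mathrm{Range}(P_{n,bc^*})$ (a unit eigenvector of $(L_{bc})^*$) together with the projection $P_{n,bc^*}^0$ yields $\|\tilde g_n - \tilde g_n^0\| \le 2\|P_{n,bc^*} - P_{n,bc^*}^0\|$. Both invoked ingredients — namely $P_n f_n = f_n$ (respectively $P_{n,bc^*}\tilde g_n = \tilde g_n$) and the fact that the projection differences tend to $0$ in norm — were recorded just before the statement of the lemma, so no new input is needed.

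There is really no serious obstacle here: the content is the standard perturbation lemma that a unit vector in the range of one projection is close, after normalization, to its image under a nearby projection. The only mild subtlety is the need to know that $\|P_n^0 f_n\|$ (and similarly for $\varphi_n$ and $\tilde g_n$) stays bounded away from $0$, which is automatic for $|n|$ large because $\|P_n - P_n^0\| \to 0$ and $\|P_{n,bc^*} - P_{n,bc^*}^0\| \to 0$.
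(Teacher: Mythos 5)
Your argument is correct and is essentially the same as the paper's: both proofs use that the vector is fixed by the corresponding Riesz projection, bound $\bigl|\|P_n^0 f_n\|-1\bigr|$ by $\|P_n-P_n^0\|$, and split $\|f_n-f_n^0\|$ into the projection-difference term plus the renormalization term, each bounded by $\|P_n-P_n^0\|$. No further comment is needed.
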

\begin{proof}
Observe that $P_{n,bc^*}\tilde{g}_n=\tilde{g}_n$ since $P_{n,bc^*}$ is a projection onto the one-dimensional eigenspace generated by $\tilde{g}_n.$
Now, we estimate $\|P_{n,bc^*}^0\tilde{g}_n\|$ as
\begin{align*}
\|P_{n,bc^*}^0\tilde{g}_n\|&=\|P_{n,bc^*}^0\tilde{g}_n-P_{n,bc^*}\tilde{g}_n+P_{n,bc^*}\tilde{g}_n\|\\&\geq\|P_{n,bc^*}\tilde{g}_n\|-\|(P_{n,bc^*}-P_{n,bc^*}^0)\tilde{g}_n\|\\
&\geq\|\tilde{g}_n\|-\|P_{n,bc^*}-P_{n,bc^*}^0\|\\
\end{align*}
and
\begin{align*}
\|P_{n,bc^*}^0\tilde{g}_n\|&=\|P_{n,bc^*}^0\tilde{g}_n-P_{n,bc^*}\tilde{g}_n+P_{n,bc^*}\tilde{g}_n\|\\&\leq\|(P_{n,bc^*}-P_{n,bc^*}^0)\tilde{g}_n\|+\|P_{n,bc^*}\tilde{g}_n\|\\
&\leq\|P_{n,bc^*}-P_{n,bc^*}^0\|+\|\tilde{g}_n\|\\
\end{align*}
We get from the above inequalities that 
$|\|P_{n,bc^*}^0\tilde{g}_n\|-1|\leq\|P_{n,bc^*}-P_{n,bc^*}^0\|$ since $\|\tilde{g}_n\|=1.$ Therefore,
\begin{align*}
\|\tilde{g}_n-g_n^0\|&\leq\|\tilde{g}_n-P_{n,bc^*}^0\tilde{g}_n\|+\|P_{n,bc^*}^0\tilde{g}_n-g_n^0\|\\
&=\|P_{n,bc^*}\tilde{g}_n-P_{n,bc^*}^0\tilde{g}_n\|+\|\|P_{n,bc^*}^0\tilde{g}_n\|.g_n^0-g_n^0\|\\
&\leq\|(P_{n,bc^*}-P_{n,bc^*}^0)\tilde{g}_n\|+|\|P_{n,bc^*}^0\tilde{g}_n\|-1|.\|g_n^0\|\\
&\leq2\|P_{n,bc^*}-P_{n,bc^*}^0\|.
\end{align*}
Similarly we get the other inequalities.
\end{proof}

\begin{Lemma}\label{summable}
$\sum_{n\in\mathbb{Z}}|\lambda_n^{\mp}-n|^2<\infty$ and $\sum_{n\in\mathbb{Z}}|\lambda_n^+-\lambda_n^-|^2<\infty.$
\end{Lemma}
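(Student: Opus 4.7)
The plan is to deduce the lemma from the matrix characterization of eigenvalues developed in \cite{instability}, combined with the $\ell^2$-summability of the corresponding matrix entries, which is a direct consequence of $\mathcal{P},\mathcal{Q}\in L^2$.

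First, I would invoke Lemma~21 of \cite{instability}: for sufficiently large $|n|$ and $|z|<1/2$, the number $\lambda=n+z$ is an eigenvalue of $L$ (periodic if $n$ is even, antiperiodic if $n$ is odd) if and only if $z$ is an eigenvalue of the matrix $\begin{pmatrix}\alpha_n(z)&\beta_n^+(z)\\\beta_n^-(z)&\alpha_n(z)\end{pmatrix}$; equivalently, $(z-\alpha_n(z))^2=\beta_n^+(z)\beta_n^-(z)$. Writing $\lambda_n^\pm=n+z_n^\pm$ (which is legal by Theorem~17 of \cite{instability} since $|z_n^\pm|\le 1/4$), this gives
$$|\lambda_n^\pm-n|=|z_n^\pm|\le|\alpha_n(z_n^\pm)|+\sqrt{|\beta_n^+(z_n^\pm)\beta_n^-(z_n^\pm)|}\le|\alpha_n(z_n^\pm)|+\tfrac12\bigl(|\beta_n^+(z_n^\pm)|+|\beta_n^-(z_n^\pm)|\bigr).$$

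Second, the entries $\alpha_n(z),\beta_n^\pm(z)$ are given in \cite{instability} by explicit series in the Fourier coefficients of $V$, and since $\mathcal{P},\mathcal{Q}\in L^2([0,\pi])$, Parseval's identity yields the uniform $\ell^2$-estimate
$$\sum_{n\in\mathbb{Z}}\sup_{|z|<1/2}\bigl(|\alpha_n(z)|^2+|\beta_n^+(z)|^2+|\beta_n^-(z)|^2\bigr)<\infty,$$
which is already part of the machinery borrowed from \cite{instability}. Squaring and summing the previous display (the finitely many small-$|n|$ terms contributing only a finite amount) yields $\sum_n|\lambda_n^\pm-n|^2<\infty$. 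The second estimate is then free of charge from the triangle inequality:
$$|\gamma_n|^2=|\lambda_n^+-\lambda_n^-|^2\le 2|\lambda_n^+-n|^2+2|\lambda_n^--n|^2,$$
so summability on the right propagates to the left.

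The main (indeed the only) nontrivial ingredient is the uniform-in-$z$ $\ell^2$ bound on $\alpha_n,\beta_n^\pm$, but since this is treated as imported from \cite{instability}, I do not expect any real obstacle in this lemma. It is a preparatory estimate whose role in what follows is merely to guarantee that the sequences arising from the pair $(f_n,\varphi_n)$ and from the adjoint eigenvector $\tilde g_n$ have summable tails, so that later arguments (e.g.\ the inner products $\langle f_n,\tilde g_n\rangle$, $\langle\varphi_n,\tilde g_n\rangle$, and the coefficients $\xi_n$) may be controlled componentwise without having to worry about conditional convergence.
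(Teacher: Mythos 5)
Your argument is correct in outline, but it takes a genuinely different route from the paper. The paper's proof imitates Theorem 6.5 of \cite{unconditional}: it tests $Lf_n=\lambda_n^+f_n$ against the free eigenvectors $e_n^1,e_n^2$, obtains $(\lambda_n^+-n)\langle f_n,e_n^j\rangle=\langle Vf_n,e_n^j\rangle$, shows $|\langle f_n,e_n^1\rangle|^2+|\langle f_n,e_n^2\rangle|^2\to1$ via Lemma \ref{remark1}, and then bounds the right-hand side by $|p(n)|+|q(-n)|$ plus a multiple of $\|P_n-P_n^0\|$; square-summability then follows from Parseval applied to the Fourier coefficients of $\mathcal{P},\mathcal{Q}$ together with Theorem 7.1 of \cite{unconditional}, which gives $\sum\|P_n-P_n^0\|^2<\infty$. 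You instead go through the determinant equation $(z-\alpha_n(z))^2=\beta_n^+(z)\beta_n^-(z)$ and the bound $|z_n^\pm|\le|\alpha_n(z_n^\pm)|+\tfrac12(|\beta_n^+(z_n^\pm)|+|\beta_n^-(z_n^\pm)|)$, which is fine, and then import the uniform-in-$z$ $\ell^2$ bound on $\alpha_n,\beta_n^\pm$ from \cite{instability}. That bound is indeed available there (it is the unweighted case of the estimates underlying the implication $V\in H(\Omega)\Rightarrow(\gamma_n)\in\ell^2(\Omega)$), but be careful with your attribution: Parseval only controls the leading term of the series for $\beta_n^\pm$ (which is literally a Fourier coefficient of $\mathcal{P}$ or $\mathcal{Q}$); the $\ell^2$-summability of the higher-order terms requires the convolution estimates of \cite{instability}, not just $V\in L^2$. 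The trade-off is that your route leans on the full $\alpha/\beta$ machinery (which the paper only deploys in later sections), whereas the paper's argument is more elementary and needs only the projection asymptotics it has already established; both are valid, and your reduction of the second assertion to the first via the triangle inequality matches the paper exactly.
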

\begin{proof}
Our proof is similar to the proof of Theorem 6.5 in \cite{unconditional}. First, we prove that $\sum_{n\in\mathbb{Z}}|\lambda_n^+-n|^2<\infty$. Consider the eigenfunctions $e_n^1$ and $e_n^2$ of the free operator with periodic condition and antiperiodic condition. Now, we have
$$\lambda_n^+\langle f_n,e_n^1\rangle=\langle Lf_n,e_n^1\rangle=\langle L^0f_n,e_n^1\rangle+\langle Vf_n,e_n^1\rangle$$
and recalling that $L^0$ is self-adjoint we obtain
$$\langle L^0f_n,e_n^1\rangle=\langle f_n,L^0e_n^1\rangle=\langle f_n,ne_n^1\rangle=n\langle f_n,e_n^1\rangle.$$
From these two equalities, we can write
$$(\lambda_n^+-n)\langle f_n,e_n^1\rangle=\langle Vf_n,e_n^1\rangle.$$
In a similar way, we get
$$(\lambda_n^+-n)\langle f_n,e_n^2\rangle=\langle Vf_n,e_n^2\rangle.$$
The last two equalities lead to
\begin{equation}
\label{sthng}
|\lambda_n^+-n|^2(|\langle f_n,e_n^1\rangle|^2+|\langle f_n,e_n^2\rangle|^2)=|\langle Vf_n,e_n^1\rangle|^2+|\langle Vf_n,e_n^2\rangle|^2.
\end{equation}
Now,
$$\langle f_n,e_n^1\rangle=\langle f_n-f_n^0,e_n^1\rangle+\langle f_n^0,e_n^1\rangle.$$
Since
$$|\langle f_n-f_n^0,e_n^1\rangle|\leq\|f_n-f_n^0\|\leq2\|P_n-P_n^0\|,$$
by Lemma \ref{remark1}, we have
$$\langle f_n,e_n^1\rangle=f_{n,1}^0+O(\|P_n-P_n^0\|).$$
A similar argument gives
$$\langle f_n,e_n^2\rangle=f_{n,2}^0+O(\|P_n-P_n^0\|).$$
We obtain that 

$$|\langle f_n,e_n^1\rangle|^2+|\langle f_n,e_n^2\rangle|^2\rightarrow |f_{n,1}^0|^2+|f_{n,2}^0|^2=1$$
as $n\rightarrow\infty.$





Now, if we consider the equation (\ref{sthng}), it is clear that for large enough $|n|$
\begin{equation}
\label{str}
|\lambda_n^+-n|^2\leq 2(|\langle Vf_n,e_n^1\rangle|^2+|\langle Vf_n,e_n^2\rangle|^2).
\end{equation}

We obtain an estimation for the first term of the right hand side of the above inequality as follows
\begin{eqnarray*}
\langle Vf_n,e_n^1\rangle&=&\langle V(f_{n,2}^0e_n^2),e_n^1\rangle+\langle V(f_n-f_{n,2}^0e_n^2),e_n^1\rangle \\&=&\frac{f_{n,2}^0}{\pi}\int_{0}^{\pi}\mathcal{P}(x)e^{-2inx}dx+\langle f_n-f_n^0,V^{*}e_n^1\rangle+\langle f_n^0-f_{n,2}^0e_n^2,V^{*}e_n^1\rangle.
\end{eqnarray*}

Since 
$$\langle f_n^0-f_{n,2}^0e_n^2,V^{*}e_n^1\rangle=\langle f_{n,1}^0e_n^1,V^{*}e_n^1\rangle=0$$
and 
$$|\langle f_n-f_n^0,V^{*}e_n^1\rangle|\leq\|f_n-f_n^0\|.\|V^{*}e_n^1\|\leq2\|P_n-P_n^0\|.\|\mathcal{P}\|,$$
we have
\begin{equation}
\label{bir}
|\langle Vf_n,e_n^1\rangle|\leq|p(n)|+2\|\mathcal{P}\|.\|P_n-P_n^0\|,
\end{equation}
and in a similar way, we can get
\begin{equation}
\label{iki}
|\langle Vf_n,e_n^2\rangle|\leq|q(-n)|+2\|\mathcal{Q}\|.\|P_n-P_n^0\|,
\end{equation}
where $p(n)$ and $q(n)$ are the Fourier coefficients of $\mathcal{P}$ and $\mathcal{Q}$. Observe that the sequences $|p(n)|$ and $|q(n)|$ are square summable since $p(n)$ and $q(n)$ are Fourier coefficients. In addition, $\|P_n-P_n^0\|$ is also square summable due to Theorem 7.1 in \cite{unconditional} since periodic and antiperiodic boundary conditions are regular. Hence, from (\ref{str}),(\ref{bir}) and (\ref{iki}) we get $\sum_{n\in\mathbb{Z}}|\lambda_n^{\mp}-n|^2<\infty.$ The proof of $\sum_{n\in\mathbb{Z}}|\lambda_n^--n|^2<\infty$ is also similar and $\sum_{n\in\mathbb{Z}}|\lambda_n^+-\lambda_n^-|^2<\infty$ comes from the triangle inequality.
\end{proof}



The next proposition gives estimates for $|\ell_i(f_n-f_n^0)|$ and $|\ell_i(\varphi_n-\varphi_n^0)|$, $i=0,1.$ The technique used in the proof of the proposition is based on a method developed by A. Batal (see Proposition 2.9 in \cite{ahmet} and Proposition 10 in \cite{ahmetarxive}).

\begin{proposition}\label{approx}
In the notation used above, there exists a sequence of positive real numbers $(\kappa_n)$ such that $\kappa_n\rightarrow 0$ and
\begin{equation}
\label{approx1}
|\ell_0(f_n-f_n^0)|\leq\kappa_n,
\end{equation}
\begin{equation}
\label{approx2}
|\ell_0(\varphi_n-\varphi_n^0)|\leq\kappa_n.
\end{equation}
\end{proposition}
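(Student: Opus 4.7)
The strategy is to reduce both inequalities to pointwise control of the boundary values of $h := f_n - f_n^0$ and $\tilde h := \varphi_n - \varphi_n^0$ at $x = 0$, since $f_n, f_n^0$ (respectively $\varphi_n, \varphi_n^0$) all satisfy the periodic (for $n$ even) or antiperiodic (for $n$ odd) boundary conditions, so $h_1(\pi) = (-1)^n h_1(0)$, $h_2(\pi) = (-1)^n h_2(0)$, and
\[
\ell_0(h) = (1 + (-1)^n b)\, h_1(0) + a\, h_2(0);
\]
it therefore suffices to show $h_1(0), h_2(0) \to 0$ (and similarly for $\tilde h$) as $|n| \to \infty$.

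The key identity is obtained by subtracting $L^0 f_n^0 = n f_n^0$ from $L f_n = \lambda_n^+ f_n$:
\[
(L^0 - \lambda_n^+) h = (\lambda_n^+ - n)\, f_n^0 - V f_n =: r_n.
\]
Expanding $h_1$ in the basis $\{e^{-ikx}\}$ and $h_2$ in $\{e^{ikx}\}$ (with $k$ of the parity of $n$) and projecting gives $h_{j,k} = r_{n,j,k}/(k - \lambda_n^+)$ for $k \ne n$, where $|k - \lambda_n^+| \ge 7/4$ by the spacing of admissible indices. The diagonal coefficient $h_{j,n}$ is already $O(\|P_n - P_n^0\|) = o(1)$ by Lemma \ref{remark1}, since $|h_{j,n}| \le \|h\|/\sqrt{\pi}$. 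Because $r_{n,1,k} = -(\mathcal{P} f_n^2)^{\wedge}_k$ for $k \ne n$ (the $(\lambda_n^+ - n) f_n^0$ contribution vanishing off-diagonal), we obtain
\[
h_1(0) = h_{1,n} - \sum_{k \ne n} \frac{(\mathcal{P} f_n^2)^{\wedge}_k}{k - \lambda_n^+},
\]
and a parallel formula for $h_2(0)$ with $\mathcal{Q} f_n^1$ in place of $\mathcal{P} f_n^2$.

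To show the sum vanishes, I would split it at $|k - n| = N$ for a parameter $N$. For the tail $|k - n| > N$, Cauchy--Schwarz produces a bound of order $N^{-1/2} \cdot \|\mathcal{P} f_n^2\|_{L^2}$, which requires $\|\mathcal{P} f_n^2\|_{L^2}$ to stay bounded in $n$; this reduces to a uniform $L^\infty$-bound on $f_n$ for large $|n|$, obtainable from the Dirac ODE combined with $\|f_n - f_n^0\|_{L^2} \to 0$ and the explicit exponential form of $f_n^0$. For the near-diagonal block $0 < |k - n| \le N$, writing $(\mathcal{P} f_n^2)^{\wedge}_k = \sum_j p_{k+j}\, (f_n^2)^{\wedge}_j$ shows that the dominant $j = n$ term equals $p_{k+n}\, (f_n^2)^{\wedge}_n$, which vanishes as $|n| \to \infty$ for each fixed $k - n$ by the Riemann--Lebesgue lemma (since $k + n \to \infty$), while the $j \ne n$ contributions are collectively of order $\|P_n - P_n^0\|$ by Lemma \ref{remark1}. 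Choosing $N = N_n \to \infty$ slowly enough that both pieces tend to zero yields $\kappa_n \to 0$.

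For $\varphi_n$, the identity $L \varphi_n = \lambda_n^- \varphi_n + \xi_n f_n$ (items (2)--(3) from Lemma 59 of \cite{instability}) gives
\[
(L^0 - \lambda_n^-) \tilde h = (\lambda_n^- - n)\, \varphi_n^0 + \xi_n f_n - V \varphi_n,
\]
and exactly the same Fourier split applies, the extra $\xi_n f_n$ term contributing at most $|\xi_n| \cdot O(\|P_n - P_n^0\|)$ since the off-diagonal Fourier coefficients of $f_n$ are $O(\|h\|)$. The main obstacle is securing the uniform $L^\infty$-bound on $f_n$ and $\varphi_n$ (equivalently, uniform $L^2$-bounds on $V f_n$ and $V \varphi_n$) along with boundedness of $\xi_n$; these are the delicate technical ingredients needed to make the Cauchy--Schwarz tail estimate uniform in $n$, and they should follow from standard asymptotic estimates in \cite{instability}.
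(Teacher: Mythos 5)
Your reduction to the boundary values at $x=0$ is exactly the paper's first step, but from there you take a genuinely different route. The paper subtracts the free equation from $Lf_n=\lambda_n^+f_n$, multiplies by the single unimodular exponential $e^{i(n+1)x}$, and integrates by parts \emph{once}: the boundary term produces $2i(f_1-f_1^0)(0)$ directly, and the three resulting integrals are controlled by $\|f_n-f_n^0\|_{L^2}$, by $\|\mathcal{P}\|_{L^2}\,\|P_n-P_n^0\|$ plus a single Fourier coefficient $\widehat{\mathcal{P}}(2n+1)$, and by $|z_n^+|$ (via Lemma \ref{summable}). You instead invert $(L^0-\lambda_n^+)$ coefficient-by-coefficient and resum the Fourier series of $h$ at $x=0$, splitting into a near-diagonal block (handled by Riemann--Lebesgue, which recovers the same coefficient $\widehat{\mathcal{P}}(2n\pm\text{const})$ the paper sees) and a tail (handled by Cauchy--Schwarz against the weights $(k-\lambda_n^+)^{-1}$). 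Your scheme works, and it makes the mechanism more transparent, but it buys this at the cost of the one ingredient you yourself flag: the tail estimate needs $\|\mathcal{P}f_n^2\|_{L^2}$ bounded uniformly in $n$, i.e.\ a uniform $L^\infty$ bound on the eigenfunctions. Since $f_n-f_n^0\to0$ only in $L^2$, the product $\mathcal{P}(f_n^2-f_n^{0,2})$ is a priori only $L^1$, so Bessel/Cauchy--Schwarz on its Fourier coefficients is not free. The bound is obtainable (e.g.\ from the Volterra integral form of the Dirac system, $f_1(x)=f_1(0)e^{-i\lambda x}+i\int_0^xe^{-i\lambda(x-t)}\mathcal{P}f_2\,dt$, together with $\|f_n\|_{L^2}=1$ and $|\mathrm{Im}\,\lambda_n^+|\le1/4$), but it is an extra lemma you would have to prove; it does not appear verbatim as a "standard estimate" you can just cite. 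The paper's trick avoids this entirely because it pairs $\mathcal{P}f_2$ against one bounded oscillating function, so only $\mathcal{P}f_2\in L^1$ is ever used, with the $L^2$-vs-$L^2$ splitting $f_2=f_2^0+(P_n-P_n^0)f_2$ doing the rest. Two smaller points to tighten if you pursue your version: justify the pointwise resummation $h_1(0)=\sum_k h_{1,k}$ (fine, since $h_1\in H^1$ with matching endpoints, so the series converges absolutely, but say so), and note that the near-diagonal $j\ne n$ contributions accumulate a factor $\sum_{0<|k-n|\le N}|k-\lambda_n^+|^{-1}=O(\log N)$, so the slowly growing cutoff $N_n$ must beat both that logarithm and the tail of $\sum|p_\ell|^2$ near $\ell=2n$.
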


\begin{proof}To obtain a clear notation, we fix and suppress the notation $n$ for the eigenvectors and put
$$f=f_n,\quad f^0=f_n^0,\quad\varphi^0=\varphi_n^0$$
and
$$f=\begin{pmatrix}
      f_1 \\
      f_2  
\end{pmatrix},\quad f^0=\begin{pmatrix}
      f_1^0 \\
      f_2^0  
\end{pmatrix},\quad \varphi=\begin{pmatrix}
      \varphi_1 \\
      \varphi_2  
\end{pmatrix},\quad \varphi^0=\begin{pmatrix}
      \varphi_1^0 \\
      \varphi_2^0  
\end{pmatrix}.$$

Now, it will be enough to find a sequence $(\kappa_n)$ converging to zero such that
\begin{equation}
\label{aprx1}
|f_i(0)-f_i^0(0)|\leq\kappa_n
\end{equation}
and
\begin{equation}
\label{aprx2}
|\varphi_i(0)-\varphi_i^0(0)|\leq\kappa_n
\end{equation}
for $i=1,2.$

We have $Lf=\lambda_n^+f$, and $i ({f^{0}_1})^{'}=n f_1^0.$
Subtracting these equations, we obtain
$$n(f_1-f_1^0)=i(f_1-f_1^0)^{'}+\mathcal{P}f_2-z_n^+f_1,$$
where $z_n^+=\lambda_n^+-n.$

Now, we assume that $n$ is even, then we have periodic eigenfunctions in the equation. We multiply both sides by $e^{i(n+1)x}$ and apply integration by parts on the first term of the right side. Then, we obtain
\begin{equation}\label{mmmm}
2i(f_1-f_1^0)(0)=I_1+I_2+I_3,
\end{equation}
where
$$I_1=-\int_0^{\pi}e^{i(n+1)x}(f_1-f_1^0)(x)dx,\quad I_2=\int_0^{\pi}e^{i(n+1)x}\mathcal{P}(x)f_2(x)dx$$
and
$$I_3=-\int_0^{\pi}z_n^+e^{i(n+1)x}f_1(x)dx.$$
For $I_1$, by Cauchy-Schwarz inequality and Lemma \ref{remark1} we have
$$|I_1|\leq\|f_1-f_1^0\|\leq\|f-f^0\|\leq\kappa_n.$$
To estimate $I_2$ recall that $f_2^0=C^0e^{inx}$ for some constant $C^0.$ Since $\|f^0\|=1$, we get $|C^0|\leq1$. Then, we have
\begin{eqnarray*}
|I_2|&\leq&\int_0^{\pi}|e^{i(n+1)x}\mathcal{P}(x)(P_n-P_n^0)(f_2)(x)|dx+|\int_0^{\pi}e^{i(n+1)x}\mathcal{P}(x)f_2^0(x)dx|\\
&\leq&\|\mathcal{P}\|.\|(P_n-P_n^0)\|+|C^0\int_0^{\pi}e^{i(n+1)x}e^{inx}\mathcal{P}(x)dx|
\end{eqnarray*}
The last term is a Fourier coefficient of the $L^2-$function $\mathcal{P}(x)$ which tends to zero as $n\rightarrow\infty.$

To obtain similar result for $I_3$, we immediately see that $|I_3|\leq|z_n^+|=|\lambda_n^+-n|$ and $z_n^+\rightarrow 0$ as $n\rightarrow\infty$ due to Lemma \ref{summable}. Hence, by (\ref{mmmm}) we get that  $|(f_1-f_1^0)(0)|$ tends to zero.

Estimation method for $(f_2-f_2^0)(0)$ can be continued by multiplying both sides of the equation 
$$n(f_2-f_2^0)=-i(f_2-f_2^0)^{'}+Qf_1-z_n^+f_2$$
by $e^{-i(n+1)x}$, and all the remaining argument is similar. So, this proves (\ref{approx1}).

Now, we recall that 
$$L\varphi=\lambda^+\varphi -\gamma_n\varphi+\xi_n f,  \quad L^0\varphi^0=n\varphi^0.$$
We subtract the second equation from the first equation and write the equation of the first components
\begin{equation}
\label{varphi}
i(\varphi_1-\varphi_1^0)^{'}+\mathcal{P}\varphi_2=n(\varphi_1-\varphi_1^0)+z_n^+\varphi_1-\gamma_n\varphi_1+\xi_nf_1.
\end{equation}
After multiplying both sides of (\ref{varphi}) by $e^{i(n+1)x}$, we integrate and obtain
$$-2i(\varphi_1-\varphi_1^0)(0)=J_1+J_2+J_3+J_4+J_5,$$
where
$$J_1=-\int_0^{\pi}e^{i(n+1)x}(\varphi_1-\varphi_1^0)(x)dx,\quad J_2=-\int_0^{\pi}e^{i(n+1)x}\mathcal{P}(x)\varphi_2(x)dx,$$
$$J_3=z_n^+\int_0^{\pi}e^{i(n+1)x}\varphi_1(x)dx,\quad J_4=-\gamma_n\int_0^{\pi}e^{i(n+1)x}\varphi_1(x)dx$$
and
$$J_5=\xi_n\int_0^{\pi}e^{i(n+1)x}f_1(x)dx.$$
The estimations for $J_1,J_2$ and $J_3$ are very similar to those for $I_1,I_2$ and $I_3$ respectively.

Lemma 40 together with Proposition 35 in \cite{instability} gives that $\gamma_n\rightarrow 0$, additionally Lemma 59, Lemma 60 and Proposition 35 in \cite{instability} imply that $\xi_n\rightarrow 0$ as $n$ goes to infinity. So, $J_4$ and $J_5$ are also dominated by a sequence converging to zero. Hence, $|(\varphi_1-\varphi_1^0)(0)|$ tends to zero.

To estimate $|(\varphi_2-\varphi_2^0)(0)|$, we follow similar calculations using $e^{-i(n+1)x}$ instead of $e^{i(n+1)x}.$ Furthermore, the way we prove the result for the case when $n$ is odd is also similar.
\end{proof}

\begin{Remark}\label{remarkx}In view of (\ref{aprx1}) and (\ref{aprx2})
\begin{equation*}
|f_n^i(0)-f_{n,i}^0|<\kappa_n\quad\text{ and }\quad |\varphi_n^i(0)-\varphi_{n,i}^0|<\kappa_n
\end{equation*}
for $i=1,2$, where $\kappa_n\rightarrow 0$
\end{Remark}

There are functionals $\alpha_n(V;z)$ and $\beta_n^{\pm}(V;z)$ defined for large enough $|n|$, $n\in \mathbb{Z}$ and $|z|<1/2$ such that $\lambda=n+z$ is (periodic if $n$ is even or  antiperiodic if $n$ is odd) eigenvalue of $L$ if and only if $z$ is an eigenvalue of the matrix $S(\lambda)$

$$\begin{pmatrix}\alpha_n(V;z)&\beta_n^-(V;z)\\ \beta_n^+(V;z)&\alpha_n(V;z)\end{pmatrix}.$$
by Lemma 21 in \cite{instability}. Furthermore, $z_n^{\pm}=\lambda_n^{\pm}-n$ are the only solutions of the basic equation

$$(z-\alpha_n(V;z))^2=\beta_n^-(V;z)\beta_n^+(V;z),$$ where $|z|<1/2$. If $Lf_n=\lambda_n^+ f_n$ then $f_n^0$ is an eigenvector of the operator $L^0+S(\lambda_n^+):E_n^0\rightarrow E_n^0$ with a corresponding eigenvalue $\lambda_n^+$, and one may write the following system
\begin{equation}\label{linearequation}
\begin{pmatrix}z_n^+-\alpha_n(z_n^+)&-\beta_n^-(z_n^+)\\-\beta_n^+(z_n^+)&z_n^+-\alpha_n(z_n^+)\end{pmatrix}
\begin{pmatrix}f_{n,1}^0\\f_{n,2}^0\end{pmatrix}=\begin{pmatrix}0\\0\end{pmatrix}.
\end{equation}
So, we get

\begin{equation}\label{maineq}
(z_n^+-\alpha_n(z_n^+))^2=\beta_n^-(z_n^+).\beta_n^+(z_n^+).
\end{equation}

We put $\varphi_n^0=\varphi_{n,1}^0e_n^1+\varphi_{n,2}^0e_n^2.$ Let $\varphi_n^0=c_1f_n^0+c_2(f_n^0)^{\bot},$ where 
$$(f_n^0)^{\bot}=\overline{f_{n,2}^0}e_n^1-\overline{f_{n,1}^0}e_n^2.$$
Then,

$$c_1=\langle\varphi_n^0,f_n^0\rangle=\langle\varphi_n^0-\varphi_n,f_n\rangle+\langle\varphi_n^0,f_n^0-f_n\rangle=O(\kappa_n),$$
and

$$|c_2|=\sqrt{1-|c_1|^2}=1+O(\kappa_n).$$

Hence, without loss of generality we may write

\begin{equation}\label{test1}
\varphi_{n,1}^0=\overline{f_{n,2}^0}+O(\kappa_n)\ , \quad \varphi_{n,2}^0=-\overline{f_{n,1}^0}+O(\kappa_n)\ .
\end{equation}

We have $\tilde{g}_n^0=e^{i\theta}g_n^0$ for some $\theta$ because the eigenspace of the free operator under general boundary conditions is one dimensional as stated in Theorem \ref{localizationgeneral}. Without loss of generality, we can put $g_n^0=\tilde{g}_n^0.$

The following two equations are due to Proposition \ref{approx}:

\begin{equation}\label{app1}
\ell_0(f_n)=\ell_0(f_n^0)+O(\kappa_n)\ ,\quad \ell_0(\varphi_n)=\ell_0(\varphi_n^0)+O(\kappa_n)
\end{equation}

By Lemma \ref{remark1}, we also obtain other estimations as
\begin{align*}|\langle f_n,\tilde{g}_n\rangle-\langle f_n^0,g_n^0\rangle|&\leq|\langle f_n-f_n^0,\tilde{g}_n\rangle|+|\langle f_n^0,\tilde{g}_n-g_n^0\rangle|\\&\leq \|f_n-f_n^0\|.\|\tilde{g}_n\|+\|f_n^0\|.\|\tilde{g}_n-g_n^0\|\leq2\kappa_n,
\end{align*}
where $\kappa_n\rightarrow 0.$ Similarly, we get

$$|\langle \varphi_n,\tilde{g}_n\rangle-\langle
\varphi_n^0,g_n^0\rangle|\leq2\kappa_n.$$
Hence, one may write

\begin{equation*}
\langle f_n,\tilde{g}_n\rangle=\langle f_n^0,g_n^0\rangle+O(\kappa_n),\quad
\langle \varphi_n,\tilde{g}_n\rangle=\langle
\varphi_n^0,g_n^0\rangle+O(\kappa_n).
\end{equation*}


Now, let us write $g_n^0$ as

$$g_n^0=A_ne_n^1+B_ne_n^2.$$

Together with (\ref{app1}), we have

$$\ell_0(\varphi_n)=(1+(-1)^n b)\varphi_{n,1}^0+a\varphi_{n,2}^0+O(\kappa_n).$$




So, we conclude that

\begin{equation}\label{est1}
\ell_0(\varphi_n)=(1+(-1)^n b)\overline{f_{n,2}^0}-a\overline{f_{n,1}^0}+O(\kappa_n)
\end{equation}

\begin{equation}\label{est2}
\ell_0(f_n)=(1+(-1)^nb)f_{n,1}^0+af_{n,2}^0+O(\kappa_n)
\end{equation}



\begin{equation}\label{ilker4}
\langle
\varphi_n,\tilde{g}_n\rangle=\overline{A_nf_{n,2}^0}-\overline{B_nf_{n,1}^0}+O(\kappa_n)
\end{equation}

\begin{equation}\label{ilker5}
\langle
f_n,\tilde{g}_n\rangle=\overline{A_n}f_{n,1}^0+\overline{B_n}f_{n,2}^0+O(\kappa_n).
\end{equation}

We now get a nonzero approximation for $\langle G_n,\tilde{g}\rangle.$

\begin{Lemma}\label{nonzero}
$\tau_n^{-1}\langle G_n,\tilde{g}_n\rangle=C+O(\kappa_n)$ for some constant $C\neq 0$, where $C$ depends on the general boundary conditions given by (\ref{ilker2})  and (\ref{ilker3}) with the restriction $ad\neq0.$
\end{Lemma}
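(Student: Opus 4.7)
The plan is to substitute the explicit expression for $G_n$ from (\ref{linear1}) and then apply the four asymptotic identities (\ref{est1})--(\ref{ilker5}) already established, reducing everything to a single algebraic identity.

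First I would use (\ref{linear1}) to write
$$\tau_n^{-1}\langle G_n,\tilde{g}_n\rangle = \ell_0(\varphi_n)\langle f_n,\tilde{g}_n\rangle - \ell_0(f_n)\langle \varphi_n,\tilde{g}_n\rangle.$$
Since the four quantities appearing on the right-hand side are all uniformly bounded in $n$ (the $f_{n,i}^0$, $\varphi_{n,i}^0$, $A_n$, $B_n$ are bounded by $1$, and $a,b$ are fixed constants), the $O(\kappa_n)$ error terms in (\ref{est1})--(\ref{ilker5}) remain $O(\kappa_n)$ after multiplication, and we may replace each factor by its stated leading order modulo $O(\kappa_n)$.

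Next, writing $\beta = 1+(-1)^n b$ for brevity, the main term becomes
$$\bigl[\beta\,\overline{f_{n,2}^0} - a\,\overline{f_{n,1}^0}\bigr]\bigl[\overline{A_n}\,f_{n,1}^0 + \overline{B_n}\,f_{n,2}^0\bigr] - \bigl[\beta\,f_{n,1}^0 + a\,f_{n,2}^0\bigr]\bigl[\overline{A_n}\,\overline{f_{n,2}^0} - \overline{B_n}\,\overline{f_{n,1}^0}\bigr].$$
Expanding both products, the two cross-terms containing $\beta\,\overline{A_n}\,f_{n,1}^0\,\overline{f_{n,2}^0}$ cancel, as do the two terms involving $a\,\overline{B_n}\,f_{n,2}^0\,\overline{f_{n,1}^0}$. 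What remains collects into
$$\bigl(\beta\,\overline{B_n} - a\,\overline{A_n}\bigr)\bigl(|f_{n,1}^0|^2 + |f_{n,2}^0|^2\bigr) = \beta\,\overline{B_n} - a\,\overline{A_n},$$
using $|f_{n,1}^0|^2 + |f_{n,2}^0|^2 = 1$.

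Finally, I would substitute the explicit formulas (\ref{onedimension1}) for $A_n$ and $B_n$. Taking complex conjugates gives $\overline{A_n} = (1-(-1)^n b)/R_n$ and $\overline{B_n} = -a/R_n$ with $R_n = \sqrt{|a|^2 + |1-(-1)^n\overline{b}|^2}$, so that
$$C = \beta\,\overline{B_n} - a\,\overline{A_n} = \frac{-a(1+(-1)^n b) - a(1 - (-1)^n b)}{R_n} = \frac{-2a}{R_n},$$
which is nonzero because the restriction $ad \neq 0$ in (\ref{ilker3}) forces $a \neq 0$. Note that $C$ depends on the parity of $n$ only through $R_n$, so strictly speaking one obtains two constants $C_{\mathrm{even}}$ and $C_{\mathrm{odd}}$, both nonzero; either way the conclusion $\tau_n^{-1}\langle G_n,\tilde{g}_n\rangle = C + O(\kappa_n)$ holds. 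The only step with any subtlety is checking that the cross-terms actually cancel, but this is a direct expansion and the cancellation is forced by the structure of $G_n$ as an antisymmetric combination of $f_n$ and $\varphi_n$ together with the relation (\ref{test1}).
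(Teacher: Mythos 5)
Your proposal is correct and follows essentially the same route as the paper: substitute (\ref{linear1}), insert the asymptotic identities (\ref{est1})--(\ref{ilker5}), observe the cross-terms cancel and the remaining terms collect against $|f_{n,1}^0|^2+|f_{n,2}^0|^2=1$ to give $(1+(-1)^n b)\overline{B_n}-a\overline{A_n}=-2a/\sqrt{|a|^2+|1-(-1)^n b|^2}$, which is nonzero since $ad\neq0$ forces $a\neq0$. Your remark that $C$ really splits into two nonzero constants according to the parity of $n$ is a fair (and harmless) refinement of the paper's statement.
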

\begin{proof}
Recall (\ref{linear1}) as
$$\tau_n^{-1}\langle G_n,\tilde{g}_n\rangle=\ell_0(\varphi_n)\langle
f_n,\tilde{g}_n\rangle-\ell_0(f_n)\langle\varphi_n,\tilde{g}_n\rangle.$$

We substitute all the estimations found by (\ref{est1}), (\ref{est2}), (\ref{ilker4}) and (\ref{ilker5}) into the equation
above, and after an easy calculation, obtain

\begin{align*}
  \tau_n^{-1}\langle G_n,\tilde{g}_n\rangle &=[(1+(-1)^n b)\overline{f_{n,2}^0}-a\overline{f_{n,1}^0}+O(\kappa_n)].[\overline{A_n}f_{n,1}^0+\overline{B_n}f_{n,2}^0+O(\kappa_n)] \\
                             &\ -[(1+(-1)^nb)f_{n,1}^0+af_{n,2}^0+O(\kappa_n)].[\overline{A_nf_{n,2}^0}-\overline{B_nf_{n,1}^0}+O(\kappa_n)]
                             \\
                             &=[(1+(-1)^n b)\overline{B_n}-a\overline{A_n}]|f_{n,1}^0|^2+[(1+(-1)^n b)\overline{B_n}-a\overline{A_n}]|f_{n,2}^0|^2+O(\kappa_n)
                             \\
                             &=(1+(-1)^n b)\overline{B_n}-a\overline{A_n}+O(\kappa_n).
\end{align*}

By (\ref{onedimension1}) we have

\begin{align*}
\tau_n^{-1}\langle G_n,\tilde{g}_n\rangle&=(1+(-1)^n b)\overline{B_n}-a\overline{A_n}+O(\kappa_n)\\
&=(1+(-1)^n b)\frac{\overline{-\overline{a}}}{\sqrt{|a|^2+|1-(-1)^n b|^2}}-a\frac{\overline{1-(-1)^n \overline{b}}}{\sqrt{|a|^2+|1-(-1)^n b|^2}}+O(\kappa_n)\\
&=\frac{-2a}{\sqrt{|a|^2+|1-(-1)^n b|^2}}+O(\kappa_n).
\end{align*}
So, the result follows because $a\neq0$.
\end{proof}

\begin{proposition}\label{prop1}
There are constants $D_1,D_2>0$ such that for $n\in\mathbb{Z}$ with large enough $|n|$,
$$|\mu_n-\lambda_n^+|\leq D_1|\gamma_n|+D_2(|B_n^+|+|B_n^-|),$$
where $B_n^{\pm}=\beta_n^{\pm}(z_n^+)$.
\end{proposition}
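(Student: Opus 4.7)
The plan is to turn the identity (\ref{ilker6}) into a ratio
$$|\mu_n-\lambda_n^+|=\frac{|t_n|\,|\xi_n\langle f_n,\tilde g_n\rangle-\gamma_n\langle\varphi_n,\tilde g_n\rangle|}{|\langle G_n,\tilde g_n\rangle|},$$
and bound the numerator above and the denominator below separately. The numerator is immediate: since $\|f_n\|=\|\varphi_n\|=\|\tilde g_n\|=1$, Cauchy--Schwarz gives $|\langle f_n,\tilde g_n\rangle|,|\langle\varphi_n,\tilde g_n\rangle|\leq 1$, while $|t_n|\leq 1$ is built into the normalization $|s_n|^2+|t_n|^2=1$. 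Hence the numerator is at most $|\xi_n|+|\gamma_n|$.

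For the denominator I would apply Lemma \ref{nonzero} to obtain $\langle G_n,\tilde g_n\rangle=\tau_n(C+O(\kappa_n))$ with $C\neq 0$, so $|\langle G_n,\tilde g_n\rangle|\geq(|C|/2)\tau_n$ for all sufficiently large $|n|$. It then remains to bound $\tau_n$ below, i.e., $\tau_n^{-1}=\sqrt{|\ell_0(f_n)|^2+|\ell_0(\varphi_n)|^2}$ above. By Remark \ref{remarkx}, the values $f_n^i(0),\varphi_n^i(0)$ differ by $O(\kappa_n)$ from the coefficients $f_{n,i}^0,\varphi_{n,i}^0$, which satisfy $|f_{n,1}^0|^2+|f_{n,2}^0|^2=1$ (and similarly for $\varphi_n^0$); and since $f_n,\varphi_n$ satisfy periodic or antiperiodic boundary conditions, $|f_n^i(\pi)|=|f_n^i(0)|$ and $|\varphi_n^i(\pi)|=|\varphi_n^i(0)|$. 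Therefore $|\ell_0(f_n)|,|\ell_0(\varphi_n)|\leq (1+|b|+|a|)+O(\kappa_n)$, so $\tau_n\geq c>0$ and $|\langle G_n,\tilde g_n\rangle|\geq c_0>0$ for large $|n|$. Combining the two halves yields
$$|\mu_n-\lambda_n^+|\leq c_0^{-1}(|\xi_n|+|\gamma_n|).$$

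The main obstacle is the remaining step, namely the quantitative bound $|\xi_n|\leq K(|B_n^+|+|B_n^-|)$. Qualitatively one already knows $\xi_n\to 0$ from Lemma 59, Lemma 60 and Proposition 35 of \cite{instability}, but here I need the explicit linear control by $B_n^{\pm}$. I would extract this from the same circle of results: the scalar $\xi_n$ encodes the Gram--Schmidt/Jordan coupling of the two-dimensional invariant subspace $E_n$ of $L$, which via the matrix equation (\ref{linearequation}) and the basic relation (\ref{maineq}) is governed by the off-diagonal entries $\beta_n^\pm(z_n^+)=B_n^\pm$ of $S(\lambda_n^+)$; a direct inspection of the construction in Lemma 59 of \cite{instability} then furnishes a constant $K$ with $|\xi_n|\leq K(|B_n^+|+|B_n^-|)$. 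Granted this last estimate, the proposition follows with $D_1=c_0^{-1}$ and $D_2=c_0^{-1}K$.
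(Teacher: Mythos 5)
Your proposal is correct and follows essentially the same route as the paper: both start from the identity (\ref{ilker6}), bound the denominator away from zero via Lemma \ref{nonzero} (the paper does this by multiplying through by $\tau_n^{-1}$ and using $\tau_n^{-1}t_n=-\ell_0(f_n)$ together with the upper bound on $|\ell_0(f_n)|$, which is equivalent to your $|t_n|\le 1$ plus a lower bound on $\tau_n$), and then invoke the $\xi_n$ estimate from Lemmas 59--60 of \cite{instability}. The only caveat is that the estimate actually available there, and the one the paper uses as (\ref{sesese}), is $|\xi_n|\leq 4|\gamma_n|+2(|B_n^+|+|B_n^-|)$ rather than the pure bound $|\xi_n|\leq K(|B_n^+|+|B_n^-|)$ you defer to ``inspection''; this weaker form already suffices, since the extra $|\gamma_n|$ term is absorbed into $D_1|\gamma_n|$.
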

\begin{proof}
We consider the equation (\ref{ilker6}). We assume that
$0<\kappa_n\leq\frac{|C|}{2}$. We multiply both sides of the equation (\ref{ilker6}) by $\tau_n^{-1}$ and get
\begin{equation}\label{pppp}
\tau_n^{-1}(\mu_n-\lambda_n^+)\langle G_n,\tilde{g}_n\rangle=\tau_n^{-1}t_n(\xi_n \langle
f_n,\tilde{g}_n\rangle-\gamma_n\langle\varphi_n,\tilde{g}_n\rangle).
\end{equation}
Lemma \ref{nonzero} guarantees that we may divide both sides of (\ref{pppp}) by $\tau_n^{-1}\langle
G_n,\tilde{g}_n\rangle$. We also have the inequality
\begin{equation}\label{sesese}
|\xi_n|\leq4|\gamma_n|+2(|B_n^+|+|B_n^-|)
\end{equation}
due to Lemma 59 and Lemma 60 in \cite{instability}. 

Note that since $|A_n|^2+|B_n|^2=1$ and $|f_{n,1}^0|^2+|f_{n,2}^0|^2=1$, by Cauchy-Schwarz inequality $|\overline{A_nf_{n,2}^0}-\overline{B_nf_{n,1}^0}|\leq1$ and $|\overline{A_n}f_{n,1}^0+\overline{B_n}f_{n,2}^0|\leq1$. Then, we obtain the following inequality by using the estimations (\ref{est1}), (\ref{ilker4}), (\ref{ilker5}) and (\ref{sesese}):
\begin{align*}|\mu_n-\lambda_n^+|&=\frac{|\tau_n^{-1}||t_n(\xi_n \langle
f_n,\tilde{g}_n\rangle-\gamma_n\langle\varphi_n,\tilde{g}_n\rangle)|}{|\tau_n^{-1}||\langle
G_n,\tilde{g}_n\rangle|}\\
&=\frac{|\ell_0(f_n)||(\xi_n \langle
f_n,\tilde{g}_n\rangle-\gamma_n\langle\varphi_n,\tilde{g}_n\rangle)|}{|\tau_n^{-1}||\langle
G_n,\tilde{g}_n\rangle|}\\
&\leq\frac{(|\ell_0(f_n^0)|+\kappa_n)[|\xi_n|(|\overline{A_n}f_{n,1}^0+\overline{B_n}f_{n,2}^0|+\kappa_n)+|\gamma_n|(|\overline{A_nf_{n,2}^0}-\overline{B_nf_{n,1}^0}|+\kappa_n)]}{|C|-\kappa_n}\\
&\leq\bigg(\frac{|(1+(-1)^n b)f_{n,1}^0+af_{n,2}^0|+\frac{|C|}{2}}{\frac{|C|}{2}}\bigg)(1+|C|)|\xi_n|\\
&+\bigg(\frac{|(1+(-1)^n b)f_{n,1}^0+af_{n,2}^0|+\frac{|C|}{2}}{\frac{|C|}{2}}\bigg)(1+|C|)|\gamma_n|.
\end{align*}
So, we get the result as
\begin{align*}
|\mu_n-\lambda_n^+|&\leq\bigg(\frac{2+2|b|+2|a|+|C|}{|C|}\bigg)(1+|C|)|\xi_n|+\bigg(\frac{2|1+b|+2|a|+|C|}{|C|}\bigg)(1+|C|)|\gamma_n|\\
&\leq\bigg(\frac{2+2|b|+2|a|+|C|}{|C|}\bigg)(1+|C|)(4|\gamma_n|+2(|B_n^+|+|B_n^-|))\\
&+\bigg(\frac{2+2|b|+2|a|+|C|}{|C|}\bigg)(1+|C|)|\gamma_n|\\
&=D_1|\gamma_n|+D_2(|B_n^+|+|B_n^-|),\\
\end{align*}
where
$$D_1=5\cdot\bigg(\frac{2+2|b|+2|a|+|C|}{|C|}\bigg)(1+|C|) \quad \text{and}\quad D_2=2\cdot\bigg(\frac{2+2|b|+2|a|+|C|}{|C|}\bigg)(1+|C|).$$
\end{proof}

\section{Estimation for $|\mu_n-\lambda_n^+|+|\lambda_n^+-\lambda_n^-|$}
We start with a generalized version of Proposition 63 in \cite{instability}.
\begin{proposition}\label{prop2}
Let \ $M>1$ be a fixed number, then for $n\in Z$ with sufficiently large $|n|$ if

\begin{equation}\label{assumption}
\frac{1}{M}|B_n^-|\leq|B_n^+|\leq M|B_n^-|
\end{equation}
then
$$|\beta_n^-(z_n^*)|+|\beta_n^+(z_n^*)|\leq\frac{1+M}{\sqrt{M}}|\gamma_n|,$$ where $B_n^{\pm}=\beta_n^{\pm}(z_n^+)$ and
$z_n^*=(\lambda_n^++\lambda_n^-)/2-n$ in the case of
simple eigenvalues and $z_n^*=\lambda_n^+-n$ otherwise.
\end{proposition}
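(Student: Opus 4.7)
The plan is to exploit the basic equation $(z - \alpha_n(z))^2 = \beta_n^-(z)\beta_n^+(z)$ at both roots $z_n^{\pm}$ and combine this with the ratio hypothesis on $|B_n^+|/|B_n^-|$. In the simple-eigenvalue case $\gamma_n \neq 0$ (so $z_n^* = (z_n^+ + z_n^-)/2$), I would fix consistent branches of the square root on a neighborhood containing both roots and write
$$z_n^{\pm} - \alpha_n(z_n^{\pm}) = \pm\sqrt{\beta_n^+(z_n^{\pm})\beta_n^-(z_n^{\pm})};$$
subtracting the two equations gives
$$\gamma_n - \bigl(\alpha_n(z_n^+) - \alpha_n(z_n^-)\bigr) = \sqrt{B_n^+ B_n^-} + \sqrt{\beta_n^+(z_n^-)\beta_n^-(z_n^-)}.$$
Since $\alpha_n$ and $\beta_n^{\pm}$ tend to zero uniformly on $|z|<1/2$, Cauchy's estimate on a slightly smaller disc gives that their derivatives are $o(1)$ as $|n|\to\infty$. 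Hence the left-hand side equals $\gamma_n(1+o(1))$ and the right-hand side equals $2\sqrt{B_n^+ B_n^-}(1+o(1))$, producing the intermediate bound $2\sqrt{|B_n^+|\,|B_n^-|} \leq |\gamma_n|(1+\varepsilon_n)$ with $\varepsilon_n \to 0$.

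Next, I apply the ratio hypothesis $\frac{1}{M}|B_n^-| \leq |B_n^+| \leq M|B_n^-|$, which gives $|B_n^{\pm}|^2 \leq M\,|B_n^+|\,|B_n^-|$ and therefore
$$|B_n^+| + |B_n^-| \leq 2\sqrt{M}\,\sqrt{|B_n^+|\,|B_n^-|} \leq \sqrt{M}\,|\gamma_n|(1+\varepsilon_n).$$
To transfer from $z_n^+$ to the midpoint $z_n^*$, I would use $|z_n^* - z_n^+| \leq |\gamma_n|/2$ together with the $o(1)$ derivative estimate for $\beta_n^{\pm}$ to obtain $|\beta_n^{\pm}(z_n^*)| \leq |B_n^{\pm}| + o(|\gamma_n|)$. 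Adding the two estimates and using $\frac{1+M}{\sqrt{M}} > \sqrt{M}$ to absorb the vanishing error yields the stated bound for all sufficiently large $|n|$.

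The double-eigenvalue case $\gamma_n = 0$ (with $z_n^* = z_n^+$) is handled separately. The basic equation yields $(z_n^+ - \alpha_n(z_n^+))^2 = B_n^+ B_n^-$, and the algebraic/geometric multiplicity structure of $S(\lambda)$ at a genuine double root forces $B_n^+ B_n^- = 0$; combined with the ratio hypothesis this gives $B_n^+ = B_n^- = 0$, so both sides of the inequality vanish.

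The main obstacle I expect is keeping the computation clean despite the branch ambiguity of the complex square roots. This can be sidestepped by working with absolute values throughout, noting $\bigl|\sqrt{\beta_n^+(z_n^-)\beta_n^-(z_n^-)}\bigr| = |B_n^+ B_n^-|^{1/2} + o(|\gamma_n|)$ by the derivative estimates, rather than tracking signs of the radicals. A secondary technical point is extracting the uniform-in-$z$ derivative bounds for $\alpha_n, \beta_n^{\pm}$ on $|z|<1/2$; these follow from the explicit Fourier-series formulas of Lemma 21 in \cite{instability} combined with Cauchy's integral formula on a slightly larger disc, and are in the same spirit as estimates already used in the companion references.
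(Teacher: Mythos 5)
Your overall strategy (extract $2\sqrt{|B_n^+B_n^-|}\lesssim|\gamma_n|$ from the basic equation, convert the geometric mean into the sum via the ratio hypothesis, then move from $z_n^+$ to $z_n^*$ by a derivative bound) differs from the paper, which disposes of the main case by quoting Lemma 49 of \cite{instability} after the elementary observation that $t\mapsto\frac{2\sqrt{t}}{1+t}$ is bounded below by $\frac{2\sqrt{M}}{1+M}$ on $[1/M,M]$. But your version has a real gap at the central step. Writing $F(z)=z-\alpha_n(z)$ and $G(z)=\beta_n^+(z)\beta_n^-(z)$, all you know is $F(z_n^\pm)^2=G(z_n^\pm)$; the equation $z_n^\pm-\alpha_n(z_n^\pm)=\pm\sqrt{G(z_n^\pm)}$ silently assumes the two roots sit on opposite branches of the square root. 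If instead $F(z_n^+)$ and $F(z_n^-)$ were nearly equal rather than nearly opposite, then $|F(z_n^+)-F(z_n^-)|$ could be as small as $\bigl||F(z_n^+)|-|F(z_n^-)|\bigr|=o(|\gamma_n|)$, and the relation $F(z_n^+)-F(z_n^-)=\gamma_n(1+o(1))$ would give no upper bound on $\sqrt{|B_n^+B_n^-|}$ at all. Your proposed remedy, ``work with absolute values throughout,'' does not repair this: $|\epsilon_+w_+-\epsilon_-w_-|$ is not determined by $|w_+|$ and $|w_-|$, and the triangle inequality only yields $|\gamma_n|(1+o(1))\le|w_+|+|w_-|$, which is the wrong direction. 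The step can be saved, but it needs an extra idea: from $\bigl(F(z_n^+)-F(z_n^-)\bigr)\bigl(F(z_n^+)+F(z_n^-)\bigr)=G(z_n^+)-G(z_n^-)$ and the derivative bounds one gets $|F(z_n^+)+F(z_n^-)|\le o(1)(|B_n^+|+|B_n^-|)+o(|\gamma_n|)$, hence $2|F(z_n^+)|\le|\gamma_n|(1+o(1))+o(1)(|B_n^+|+|B_n^-|)$, and the last term can be absorbed because (\ref{assumption}) gives $|F(z_n^+)|=\sqrt{|B_n^+B_n^-|}\ge\frac{1}{2\sqrt{M}}(|B_n^+|+|B_n^-|)$. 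Without something of this kind the argument does not close.

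The case $\gamma_n=0$ is also asserted rather than proved. A double root of the scalar equation $(z-\alpha_n(z))^2=\beta_n^+(z)\beta_n^-(z)$ is not the same thing as $z$ being a double eigenvalue of the fixed matrix $S(n+z)$, so ``the multiplicity structure of $S(\lambda)$ forces $B_n^+B_n^-=0$'' does not follow; a priori $h_n(z)=(z-\alpha_n(z))^2-\beta_n^+(z)\beta_n^-(z)$ can have a double zero at a point where $\beta_n^+\beta_n^-\neq0$. The paper rules this out only under the ratio hypothesis, via a Rouch\'e argument showing $h_n'(z_n^+)=0$ and then deriving $\frac{|B_n^+|+|B_n^-|}{\sqrt{|B_n^+B_n^-|}}\ge 2\sqrt{M}(M+1)$, which contradicts the bound $\frac{M+1}{\sqrt{M}}$ forced by (\ref{assumption}). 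You need an argument of this type, or a citation, for this case as well.
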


\begin{proof}
We mainly follow the proof of Propositon 63 in \cite{instability}.

The case $B_n^+=B_n^-=0$ is explained in the proof of Proposition 63 in \cite{instability}.
Assume $B_n^+B_n^-\neq 0$ and $\gamma_n\neq 0$. Since
$t_n=\frac{|B_n^+|}{|B_n^-|}\in [\frac{1}{M},M]$, we have 
$$\sqrt{M}-\sqrt{t_n}\geq\frac{\sqrt{M}-\sqrt{t_n}}{\sqrt{t_n}.\sqrt{M}}$$
because $1\geq\frac{1}{\sqrt{t_nM}}\cdot$ Then, we get
$$\frac{1}{\sqrt{M}}+\sqrt{M}\geq\frac{1}{\sqrt{t_n}}+\sqrt{t_n}$$
which leads to
$$\frac{2\sqrt{t_n}}{1+t_n}\geq\frac{2\sqrt{M}}{1+M}\cdot$$
In Lemma 49 in \cite{instability} we take
$$\delta_n<\frac{\sqrt{M}}{1+M}$$ for sufficiently large $|n|$, then
\begin{align*}|\gamma_n|&\geq(\frac{2\sqrt{t_n}}{1+t_n}-\delta_n)(|\beta_n^-(z_n^*)|+|\beta_n^+(z_n^*)|)\geq(\frac{2\sqrt{M}}{1+M}-\frac{\sqrt{M}}{1+M})(|\beta_n^-(z_n^*)|+|\beta_n^+(z_n^*)|),\end{align*}
which gives us the result.

Now, if $B_n^+B_n^-\neq 0$ and $\gamma_n=0,$ then $z_n^+$ is the only root of the equation (\ref{maineq}) or zero of the function
$$h_n(z)=(\zeta_n(z))^2-\beta_n^+(z)\beta_n^-(z)$$
in the disc $D=\{z:|z|<1/8\}$, where $\zeta_n(z)=z-\alpha_n(z)$. The functions $h_n$ is analytic on $D$ since $\beta_n^{\mp}(z)$ and $\alpha_n(z)$ are analytic funtions on $D$ (see Proposition 28 in \cite{instability}). We also have
$$|h_n(z)-z^2|=|z^2+\alpha_n(z)^2-2z\alpha_n(z)-\beta_n^-(z)\beta_n^+(z)-z^2|=|\alpha_n(z)^2-2z\alpha_n(z)-\beta_n^-(z)\beta_n^+(z)|.$$
By Proposition 35 in \cite{instability}, the maximum values of $|\alpha_n(z)|$ and $|\beta_n^{\mp}(z)|$ on the boundary of $D$ converge to zero as $|n|\rightarrow\infty$, hence we may write for sufficiently large $|n|$
$$\sup_{\partial D}|h_n(z)-z^2|<\sup_{\partial D}|z^2|.$$
By Rouch\'{e}'s theorem, $z^+$ is a double root of the equation $h_n(z)=0$ which leads to $h_n'(z^+)=0,$ so the following holds
$$2\zeta_n(z^+)\cdot(1-\frac{d \alpha_n}{dz}(z^+))=\frac{d\beta_n^+}{dz}(z^+)\cdot\beta_n^-(z^+)+\beta_n^+(z^+)\cdot\frac{d\beta_n^-}{dz}(z^+).$$

If we consider the upper bounds $$|{\frac{d\alpha_n}{dz}(z^+)}|\leq\frac{1}{\sqrt{M}(M+1)+1},\quad
|{\frac{d\beta_n^{\pm}}{dz}(z^+)}|\leq\frac{1}{\sqrt{M}(M+1)+1}$$ for sufficiently large $|n|$ by Proposition 35 in \cite{instability}, then triangle inequality gives
$$2|\zeta_n(z^+)|(1-\frac{1}{\sqrt{M}(M+1)+1})\leq\frac{1}{\sqrt{M}(M+1)+1}(|B_n^+|+|B_n^-|),$$
where $|\zeta_n(z^+)|=\sqrt{|B_n^+B_n^-|}$ by the basic equation (\ref{maineq}). Hence, we have 
$$2(\sqrt{M}(M+1))\leq\frac{|B_n^+|+|B_n^-|}{\sqrt{|B_n^+B_n^-|}}\leq\frac{\sqrt{M}(M+1)|B_n^-|}{\sqrt{|B_n^-B_n^-|}}$$
by (\ref{assumption}) and the above inequality, which gives a contradiction $2\leq1$. Hence, the proof is complete.
\end{proof}
Now, we consider the complementary cases

\begin{equation}\label{subcases}
(i)\ M|B_n^+|<|B_n^-|,\ \ \ (ii)\ M|B_n^-|<|B_n^+|.
\end{equation}

\begin{Lemma}\label{lem1}
If $(i)$ in (\ref{subcases}) is true  and $|n|$ is sufficiently large, then the
followings hold $$|f_{n,2}^0|\leq\frac{1}{\sqrt{M+1}},\quad |f_{n,1}^0|\geq\frac{\sqrt{M}}{\sqrt{M+1}},$$
$$|\varphi_{n,2}^0|>\frac{\sqrt{M}}{\sqrt{M+1}}(1-\kappa_n),\quad |\varphi_{n,1}^0|<\frac{1}{\sqrt{M+1}}+4\sqrt{\kappa_n},$$
where $(\kappa_n)$ is a sequence of positive numbers such that $\kappa_n\rightarrow0.$
\end{Lemma}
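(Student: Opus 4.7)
The plan is to extract the key ratio $|f_{n,2}^0|/|f_{n,1}^0|$ from the linear system (\ref{linearequation}) and then transfer the resulting bounds to $\varphi_n^0$ via (\ref{test1}).

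First I would read off from the second row of (\ref{linearequation}) the relation $(z_n^+ - \alpha_n(z_n^+)) f_{n,2}^0 = B_n^+ f_{n,1}^0$, and combine it with the basic equation (\ref{maineq}), $|z_n^+ - \alpha_n(z_n^+)|^2 = |B_n^-||B_n^+|$. Under assumption (i), $B_n^- \neq 0$ (so the first case $B_n^+ = 0$ just forces $f_{n,2}^0 = 0$, which trivially satisfies the bounds), and the two relations give
$$
\frac{|f_{n,2}^0|^2}{|f_{n,1}^0|^2} = \frac{|B_n^+|}{|B_n^-|} < \frac{1}{M}.
$$
Together with the normalization $|f_{n,1}^0|^2 + |f_{n,2}^0|^2 = 1$, this immediately yields $|f_{n,1}^0|^2 > M/(M+1)$ and $|f_{n,2}^0|^2 < 1/(M+1)$, which are the first two displayed inequalities.

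Next, for the estimate on $|\varphi_{n,2}^0|$, I would invoke (\ref{test1}), which writes $\varphi_{n,2}^0 = -\overline{f_{n,1}^0} + O(\kappa_n)$. Hence
$$
|\varphi_{n,2}^0| \geq |f_{n,1}^0| - C\kappa_n > \sqrt{M/(M+1)} - C\kappa_n,
$$
for some fixed constant $C$. Since the lemma permits us to redefine $\kappa_n$ to any positive sequence tending to zero, we absorb $C$ (divided by $\sqrt{M/(M+1)}$) into a new such sequence, still denoted $\kappa_n$, and obtain $|\varphi_{n,2}^0| > \sqrt{M/(M+1)}\,(1-\kappa_n)$.

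Finally, for $|\varphi_{n,1}^0|$, I would use the unit-norm identity $|\varphi_{n,1}^0|^2 + |\varphi_{n,2}^0|^2 = 1$ together with the lower bound just obtained:
$$
|\varphi_{n,1}^0|^2 \leq 1 - \frac{M}{M+1}(1-\kappa_n)^2 \leq \frac{1}{M+1} + 2\kappa_n
$$
for large $|n|$ (expanding and using $\kappa_n^2 \leq \kappa_n$). The square-root step is where the exponent $1/2$ in the target bound $4\sqrt{\kappa_n}$ enters naturally: applying $\sqrt{a+b} \leq \sqrt{a}+\sqrt{b}$ gives $|\varphi_{n,1}^0| \leq 1/\sqrt{M+1} + \sqrt{2\kappa_n} < 1/\sqrt{M+1} + 4\sqrt{\kappa_n}$. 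The only subtlety is the bookkeeping of constants absorbed into the definition of $\kappa_n$; since the lemma only requires $\kappa_n \to 0$, this is harmless, and there is no genuine analytic obstacle in the argument.
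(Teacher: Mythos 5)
Your argument is correct, and for the first two inequalities it coincides with the paper's proof: both read off $|f_{n,2}^0|=\sqrt{|B_n^+|/|B_n^-|}\,|f_{n,1}^0|$ from the linear system (\ref{linearequation}) together with the basic equation (\ref{maineq}) and then apply the normalization $|f_{n,1}^0|^2+|f_{n,2}^0|^2=1$. For the bounds on $\varphi_n^0$ you take a genuinely shorter route: you invoke the already-established componentwise approximation (\ref{test1}), $\varphi_{n,2}^0=-\overline{f_{n,1}^0}+O(\kappa_n)$, so the lower bound on $|\varphi_{n,2}^0|$ is inherited directly from the lower bound on $|f_{n,1}^0|$, and the upper bound on $|\varphi_{n,1}^0|$ follows from the unit norm. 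The paper instead does not use (\ref{test1}) here; it works only with the weaker near-orthogonality estimate $|\langle f_n^0,\varphi_n^0\rangle|\le 2\kappa_n$ (from $f_n\perp\varphi_n$ and Lemma \ref{remark1}), derives $|\varphi_{n,1}^0|\le |\varphi_{n,2}^0|/\sqrt{M}+2\kappa_n\sqrt{M+1}/\sqrt{M}$, and then solves against the normalization. The two routes rest on the same underlying facts (orthogonality of $f_n,\varphi_n$ plus the projection estimates), but yours is cleaner given that (\ref{test1}) is stated earlier in the paper, at the cost of leaning on the ``without loss of generality'' phase normalization implicit in (\ref{test1}) — which is harmless since you only ever use moduli. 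Your bookkeeping of constants into a renamed $\kappa_n$ is consistent with how the paper itself passes from $\sqrt{1-16\kappa_n}$ in its proof to $(1-\kappa_n)$ in the statement, so there is no gap.
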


\begin{proof}
We follow the proof of Lemma 64 in \cite{instability}.

Let $M>1$ and suppose $M|B_n^+|<|B_n^-|.$
By (\ref{linearequation}), we have the followings
$$\xi_n^+f_{n,1}^0+B_n^-f_{n,2}^0=0,\quad (\xi_n^+)^2=B_n^-B_n^+,$$
which gives
$$|B_n^-||f_{n,2}^0|=|\xi^+||f_{n,1}^0|=\sqrt{|B_n^-B_n^+|}|f_{n,1}^0|.$$
Then, we obtain
$$|f_{n,2}^0|=\frac{\sqrt{|B_n^+|}}{\sqrt{|B_n^-|}}.|f_{n,1}^0|\leq\frac{1}{\sqrt{M}}|f_{n,1}^0|$$
and
$$(M+1)|f_{n,2}^0|^2\leq|f_{n,1}^0|^2+|f_{n,2}^0|^2=1,$$ so
\begin{equation}\label{ub1}
|f_{n,2}^0|\leq\frac{1}{\sqrt{M+1}}.
\end{equation}
And also we get a lower bound for $|f_{n,1}^0|$
$$1=|f_{n,1}^0|^2+|f_{n,2}^0|^2\leq|f_{n,1}^0|^2+\frac{|f_{n,1}^0|^2}{M}=\frac{M+1}{M}|f_{n,1}^0|^2.$$
Hence
\begin{equation}\label{lb1}
|f_{n,1}^0|\geq\frac{\sqrt{M}}{\sqrt{M+1}}\cdot
\end{equation}
Now, we need to find bounds for $|\varphi_{n,1}^0|$ and $|\varphi_{n,2}^0|$. Noting that $f_n \bot\varphi_n$, we obtain
$$|\langle
f_n^0,\varphi_n^0\rangle|\leq|\langle
f_n^0-f_n,\varphi_n^0\rangle|+|\langle
f_n,\varphi_n^0-\varphi_n\rangle|\leq\|f_n-f_n^0\|\|\varphi_n^0\|+\|f_n\|\|\varphi_n-\varphi_n^0\|\leq2\kappa_n$$
since $\|f_n-f_n^0\|\leq\kappa_n$ and $\|\varphi_n-\varphi_n^0\|\leq\kappa_n$ by Lemma \ref{remark1}. On the other hand,
\begin{align*}
|\langle
f_n^0,\varphi_n^0\rangle|&=|f_{n,1}^0\overline{\varphi_{n,1}^0}+f_{n,2}^0\overline{\varphi_{n,2}^0}|\leq2\kappa_n,
\end{align*}
which leads to
$$|f_{n,1}^0\varphi_{n,1}^0|\leq|f_{n,2}^0\varphi_{n,2}^0|+2\kappa_n.$$
Then, by (\ref{ub1}) and (\ref{lb1}),
$$|\varphi_{n,1}^0|\leq\frac{|f_{n,2}^0|}{|f_{n,1}^0|}|\varphi_{n,2}^0|+\frac{2\kappa_n}{|f_{n,1}^0|}\leq\frac{|\varphi_{n,2}^0|}{\sqrt{M}}+\frac{2\kappa_n\sqrt{M+1}}{\sqrt{M}}$$
and
\begin{align*}
1&=|\varphi_{n,1}^0|^2+|\varphi_{n,2}^0|^2\leq\frac{|\varphi_{n,2}^0|^2}{M}+\frac{4\kappa_n^2(M+1)}{M}+\frac{4|\varphi_{n,2}^0|\kappa_n\sqrt{M+1}}{M}+|\varphi_{n,2}^0|^2\\
&\leq\frac{|\varphi_{n,2}^0|^2}{M}+4\kappa_n\frac{\kappa_n(M+1)+\sqrt{M+1}}{M}+|\varphi_{n,2}^0|^2\\
&\leq\frac{|\varphi_{n,2}^0|^2}{M}+4\kappa_n\frac{\kappa_n(M+M)+M+M}{M}+|\varphi_{n,2}^0|^2<\frac{|\varphi_{n,2}^0|^2}{M}+16\kappa_n+|\varphi_{n,2}^0|^2.
\end{align*}
Now, we have
\begin{equation}\label{lb2}
|\varphi_{n,2}^0|>\frac{\sqrt{M}}{\sqrt{M+1}}\sqrt{1-16\kappa_n}.
\end{equation}
On the other hand,
\begin{align*}
1&=|\varphi_{n,1}^0|^2+|\varphi_{n,2}^0|^2\geq|\varphi_{n,1}^0|^2+\frac{M}{M+1}(1-16\kappa_n),
\end{align*}
which leads to
$$\frac{1+16M\kappa_n^2}{M+1}\geq |\varphi_{n,1}^0|^2.$$
Since we have $\sqrt{x+y}\leq\sqrt{x}+\sqrt{y}$ for any $x,y\geq0$, we may write as follows
$$\frac{1}{\sqrt{M+1}}+\frac{4\sqrt{M}\sqrt{\kappa_n}}{\sqrt{M+1}}\geq|\varphi_{n,1}^0|,$$
which gives
\begin{equation}\label{ub2}
|\varphi_{n,1}^0|<\frac{1}{\sqrt{M+1}}+4\sqrt{\kappa_n}\ .
\end{equation}

\end{proof}
An analogy of the previous lemma can be given for the case $(ii)$ in (\ref{subcases}) which has a very similar proof.

\begin{Lemma}\label{lem2}
If $(ii)$ in (\ref{subcases}) is true and $|n|$ is sufficiently large, then the
followings hold 
$$|f_{n,1}^0|\leq\frac{1}{\sqrt{M+1}},\quad |f_{n,2}^0|\geq\frac{\sqrt{M}}{\sqrt{M+1}},$$
$$|\varphi_{n,1}^0|>\frac{\sqrt{M}}{\sqrt{M+1}}(1-\kappa_n),\quad |\varphi_{n,2}^0|<\frac{1}{\sqrt{M+1}}+4\sqrt{\kappa_n},$$
where $(\kappa_n)$ is a sequence of positive numbers such that $\kappa_n\rightarrow0.$
\end{Lemma}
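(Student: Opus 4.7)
The proof will mirror that of Lemma \ref{lem1} with the roles of the indices $1$ and $2$ interchanged, so I will indicate only what changes. The starting point is again the linear system (\ref{linearequation}), together with the basic equation $(\xi_n^+)^2 = B_n^- B_n^+$, where $\xi_n^+ = z_n^+ - \alpha_n(z_n^+)$. Instead of using the first row (which was the lever in Lemma \ref{lem1}), I would now use the second row, which yields $B_n^+ f_{n,1}^0 = \xi_n^+ f_{n,2}^0$, hence
$$|f_{n,1}^0| \;=\; \frac{\sqrt{|B_n^-|}}{\sqrt{|B_n^+|}}\,|f_{n,2}^0|.$$
Under the hypothesis $M|B_n^-| < |B_n^+|$ this gives $|f_{n,1}^0| \leq |f_{n,2}^0|/\sqrt{M}$, and then combining with $|f_{n,1}^0|^2+|f_{n,2}^0|^2 = 1$ produces $|f_{n,1}^0| \leq 1/\sqrt{M+1}$ and $|f_{n,2}^0| \geq \sqrt{M}/\sqrt{M+1}$ by the same algebra as in Lemma \ref{lem1}.

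For the bounds on $\varphi_n^0$, I would use $f_n \perp \varphi_n$ together with Lemma \ref{remark1} to get $|\langle f_n^0,\varphi_n^0\rangle| \leq 2\kappa_n$, i.e.\ $|f_{n,1}^0 \overline{\varphi_{n,1}^0} + f_{n,2}^0 \overline{\varphi_{n,2}^0}| \leq 2\kappa_n$. Rearranged, this yields
$$|\varphi_{n,2}^0| \;\leq\; \frac{|f_{n,1}^0|}{|f_{n,2}^0|}\,|\varphi_{n,1}^0| + \frac{2\kappa_n}{|f_{n,2}^0|} \;\leq\; \frac{|\varphi_{n,1}^0|}{\sqrt{M}} + \frac{2\kappa_n \sqrt{M+1}}{\sqrt{M}},$$
the mirror of the inequality used in Lemma \ref{lem1}. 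Substituting into $|\varphi_{n,1}^0|^2+|\varphi_{n,2}^0|^2 = 1$ and using the crude bound $\sqrt{M+1}\leq 2\sqrt{M}$ to clean up constants (exactly as in the previous proof) gives $|\varphi_{n,1}^0| > \sqrt{M}/\sqrt{M+1}\cdot\sqrt{1-16\kappa_n}$, and then the inequality $\sqrt{1-16\kappa_n}\geq 1-\kappa_n$ (valid for $\kappa_n$ small, since $1-16\kappa_n \geq (1-\kappa_n)^2$ once $\kappa_n$ is small enough) yields the stated lower bound $|\varphi_{n,1}^0| > \sqrt{M}/\sqrt{M+1}(1-\kappa_n)$. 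The upper bound for $|\varphi_{n,2}^0|$ then follows from $|\varphi_{n,2}^0|^2 \leq 1 - |\varphi_{n,1}^0|^2$ and the subadditivity $\sqrt{x+y}\leq \sqrt{x}+\sqrt{y}$ as at the end of Lemma \ref{lem1}.

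There is no real obstacle here: the whole argument is the symmetric counterpart obtained by swapping the two coordinates of $E_n^0$, which amounts to interchanging the roles of $\beta_n^+$ and $\beta_n^-$ in the linear system (\ref{linearequation}). The only point to double-check is that the direction of the inequality from (\ref{linearequation}) does flip correctly under hypothesis (ii): this is just the observation that the ratio $|f_{n,1}^0|/|f_{n,2}^0|$ equals $\sqrt{|B_n^-|/|B_n^+|}$ (not $\sqrt{|B_n^+|/|B_n^-|}$), so the small quantity is now $|f_{n,1}^0|$, not $|f_{n,2}^0|$. All subsequent estimates then go through with indices $1$ and $2$ exchanged and constants unchanged.
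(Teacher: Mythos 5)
Your proposal follows exactly the route the paper intends: the paper gives no separate proof of Lemma~\ref{lem2}, stating only that it ``has a very similar proof'' to Lemma~\ref{lem1}, and your index swap (using the second row of (\ref{linearequation}) to get $|f_{n,1}^0|=\sqrt{|B_n^-|/|B_n^+|}\,|f_{n,2}^0|$, then running the orthogonality argument with $1$ and $2$ interchanged) is precisely that mirror argument, correctly executed in substance.

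One micro-step is justified incorrectly, though: the inequality $\sqrt{1-16\kappa_n}\geq 1-\kappa_n$ is \emph{false} for small $\kappa_n>0$, since squaring gives $1-16\kappa_n\geq 1-2\kappa_n+\kappa_n^2$, i.e.\ $0\geq 14\kappa_n+\kappa_n^2$, which fails for every positive $\kappa_n$; the inequality actually points the other way. This does not sink the lemma, because $(\kappa_n)$ in the statement is only required to be \emph{some} positive null sequence, so one simply replaces $\kappa_n$ by the null sequence $\kappa_n':=1-\sqrt{1-16\kappa_n}$ (the paper itself performs the same silent relabeling between (\ref{lb2}) and the statement of Lemma~\ref{lem1}). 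You should either make that relabeling explicit or drop the false comparison; as written, that one line is a wrong assertion rather than a valid deduction.
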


The next lemma gives an estimation for the ratio $\frac{|\ell_0(f_n)|}{|\ell_0(\varphi_n)|}$ in the two cases $(i)$ and $(ii)$ in (\ref{subcases}).

\begin{Lemma}\label{zordu1}
Suppose $b\neq\pm1$. If $|n|$ is sufficiently large and one of the cases $(i)$ and $(ii)$ in (\ref{subcases}) is
true for 
\begin{equation}\label{lastlast}
M>max\{4(|a|/|1+b|)^2,4(|a|/|1-b|)^2,4(|1+b|/|a|)^2,4(|1-b|/|a|)^2\},
\end{equation}
then there are
constants $D_3>0$ and $D_4>0$ such that
\begin{equation}\label{result}
D_3<\frac{|\ell_0(f_n)|}{|\ell_0(\varphi_n)|}<D_4
\end{equation}
holds. 
\end{Lemma}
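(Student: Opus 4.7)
The plan is to substitute the approximations (\ref{est1}) and (\ref{est2}), which express $\ell_0(\varphi_n)$ and $\ell_0(f_n)$ in terms of $f_{n,1}^0$ and $f_{n,2}^0$ alone, and then invoke the bounds on $|f_{n,1}^0|,|f_{n,2}^0|$ supplied by Lemma \ref{lem1} in case $(i)$ and by Lemma \ref{lem2} in case $(ii)$. The upper bound on the ratio is essentially free from the triangle inequality combined with $|f_{n,1}^0|^2+|f_{n,2}^0|^2=1$; the nontrivial content is producing a \emph{strictly positive} lower bound on both $|\ell_0(f_n)|$ and $|\ell_0(\varphi_n)|$, which is exactly what the quantitative hypothesis on $M$ is designed to enforce via a reverse-triangle-inequality argument.

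In case $(i)$, Lemma \ref{lem1} gives $|f_{n,1}^0|\geq\sqrt{M}/\sqrt{M+1}$ and $|f_{n,2}^0|\leq 1/\sqrt{M+1}$, so (\ref{est2}) yields
$$|\ell_0(f_n)|\geq|1+(-1)^n b|\tfrac{\sqrt{M}}{\sqrt{M+1}}-|a|\tfrac{1}{\sqrt{M+1}}-O(\kappa_n).$$
The hypothesis $M>4(|a|/|1\pm b|)^2$ forces $|a|\leq\tfrac12|1\pm b|\sqrt{M}$, so this bound is strictly positive of order $|1\pm b|$ for large $|n|$. A dual calculation from (\ref{est1}) gives
$$|\ell_0(\varphi_n)|\geq|a|\tfrac{\sqrt{M}}{\sqrt{M+1}}-|1+(-1)^n b|\tfrac{1}{\sqrt{M+1}}-O(\kappa_n),$$
and the condition $M>4(|1\pm b|/|a|)^2$ now makes this positive of order $|a|$. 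Paired with the trivial upper bounds $|\ell_0(f_n)|,|\ell_0(\varphi_n)|\leq|1\pm b|+|a|+O(\kappa_n)$, these two inequalities produce constants $D_3,D_4>0$ depending only on $a$ and $b$, and the ratio bound follows once $|n|$ is taken large enough that the $O(\kappa_n)$ remainders are absorbed.

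Case $(ii)$ is handled by an entirely symmetric argument using Lemma \ref{lem2}: the roles of $f_{n,1}^0$ and $f_{n,2}^0$ swap, so $\ell_0(f_n)$ becomes dominated by the $af_{n,2}^0$ term (magnitude $\approx |a|$) and $\ell_0(\varphi_n)$ by the $(1+(-1)^n b)\overline{f_{n,2}^0}$ term (magnitude $\approx|1\pm b|$). The same $M$ condition ensures each sub-dominant term has magnitude less than half that of its dominant partner, again giving uniform two-sided bounds on the ratio.

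The main (modest) obstacle is bookkeeping: the factor $(-1)^n$ in (\ref{est1})--(\ref{est2}) means even and odd $n$ must be handled with $|1+b|$ and $|1-b|$ respectively, which is precisely why the statement takes the maximum over all four quantities $4(|a|/|1\pm b|)^2$ and $4(|1\pm b|/|a|)^2$. Once $M$ exceeds this maximum, the reverse-triangle-inequality estimates above hold uniformly in parity, and the hypothesis $b\neq\pm 1$ enters exactly to keep all four quantities finite (so that $|1\pm b|>0$ can appear in the denominators).
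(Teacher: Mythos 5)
Your proposal is correct and follows essentially the same route as the paper: triangle inequality for the upper bounds, and reverse triangle inequality plus the component bounds of Lemmas \ref{lem1}/\ref{lem2} together with the condition (\ref{lastlast}) (which makes the subdominant term at most half the dominant one) for the strictly positive lower bounds on $|\ell_0(f_n)|$ and $|\ell_0(\varphi_n)|$, with the four-fold maximum handling the parity of $n$. The only cosmetic difference is that you lower-bound $|\ell_0(\varphi_n)|$ via (\ref{est1}) in terms of $\overline{f_{n,1}^0},\overline{f_{n,2}^0}$, while the paper uses the direct bounds on $|\varphi_{n,1}^0|,|\varphi_{n,2}^0|$ from Lemma \ref{lem1}; these are equivalent up to the $O(\kappa_n)$ terms already present.
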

\begin{proof}
We mainly follow the proof of Lemma 64 in \cite{instability}.

Suppose the case $(i)$ in (\ref{subcases}) is true. Now, in order to get the inequality (\ref{result}) we have to find lower bounds and upper bounds for both $|\ell_0(f_n)|$ and $|\ell_0(\varphi_n)|$. We easily get upper bounds as
\begin{equation}\label{g1g1}
|\ell_0(f_n)|\leq|\ell_0(f_n^0)|+\kappa_n\leq1+|b|+|a|+\kappa_n
\end{equation}
and
\begin{equation}\label{g2g2}
|\ell_0(\varphi_n)|\leq|\ell_0(\varphi_n^0)|+\kappa_n\leq1+|b|+|a|+\kappa_n
\end{equation}
by (\ref{app1}). 

We continue to get lower bounds for $|\ell_0(f_n)|$ and $|\ell_0(\varphi_n)|$ by using the results coming from Lemma \ref{lem1} as follows

\begin{align*}
|\ell_0(f_n)|&\geq|\ell_0(f_n^0)|-\kappa_n=|(1+(-1)^nb)f_{n,1}^0+af_{n,2}^0|-\kappa_n\\
&\geq|(1+(-1)^nb)||f_{n,1}^0|-|a||f_{n,2}^0|-\kappa_n\geq|(1+(-1)^nb)|\frac{\sqrt{M}}{\sqrt{M+1}}-|a|\frac{1}{\sqrt{M+1}}-\kappa_n\\
&=\frac{|(1+(-1)^nb)|\sqrt{M}-|a|}{\sqrt{M+1}}-\kappa_n.
\end{align*}
Then, since $M>4(|a|/|1\mp b|)^2$ by (\ref{lastlast}) we get
$$|\ell_0(f_n)|\geq\frac{|a|}{\sqrt{M+1}}-\kappa_n.$$

We also obtain the inequality

\begin{align*}
|\ell_0(\varphi_n)|&\geq|\ell_0(\varphi_n^0)|-\kappa_n=|(1+(-1)^nb)\varphi_{n,1}^0+a\varphi_{n,2}^0|-\kappa_n\\
&\geq|a|.|\varphi_{n,2}^0|-|(1+(-1)^nb)|.|\varphi_{n,1}^0|-\kappa_n\\&\geq|a|\frac{\sqrt{M}}{\sqrt{M+1}}(1-\kappa_n)-|(1+(-1)^nb|(\frac{1}{\sqrt{M+1}}+4\sqrt{\kappa_n})-\kappa_n.
\end{align*}
Since $M>4(|1\mp b|/|a|)^2$ by (\ref{lastlast}) we get
\begin{align*}
|\ell_0(\varphi_n)|&\geq\frac{|1+(-1)^nb|}{\sqrt{M+1}}-(|a|\frac{\sqrt{M}}{\sqrt{M+1}}+1)\kappa_n+4|1+(-1)^nb|\sqrt{\kappa_n}.
\end{align*}

In case when $n$ is even,
we have
$$|\ell_0(f_n)|\geq\frac{|a|}{2\sqrt{M+1}}, \quad |\ell_0(\varphi_n)|\geq\frac{|1+b|}{2\sqrt{M+1}}$$
for sufficiently large $|n|.$
Hence, by (\ref{g1g1}) and (\ref{g2g2}) for $\kappa_n<1$, we may conclude that
$$\frac{\frac{|a|}{2\sqrt{M+1}}}{1+|b|+|a|}\leq\frac{\frac{|a|}{2\sqrt{M+1}}}{1+|b|+|a|+\kappa_n}\leq\frac{|\ell_0(f_n)|}{|\ell_0(\varphi_n)|}\leq\frac{1+|b|+|a|+\kappa_n}{\frac{|1+b|}{2\sqrt{M+1}}}\leq\frac{2+|b|+|a|}{\frac{|1+b|}{2\sqrt{M+1}}}$$
which leads to (\ref{result}) with 
$$D_3=\frac{\frac{|a|}{2\sqrt{M+1}}}{1+|b|+|a|}\ , \quad\quad D_4=\frac{2+|b|+|a|}{\frac{|1+b|}{2\sqrt{M+1}}}\cdot$$

Similar result holds in the case when $n$ is odd. Also, the proof for the case $(ii)$ in (\ref{subcases}) is the same as the proof for the case $(i)$ in (\ref{subcases}).

\end{proof}

We give an analogue of Proposition 65 in \cite{instability} with its similar proof.
\begin{proposition}\label{prop3}
If $(i)$ or $(ii)$ in (\ref{subcases}) holds for 
\begin{equation}\label{moremore}
M>max\{4(|1- b|/|a|)^2,4(|1+ b|/|a|)^2,4(|a|/|1-b|)^2,4(|a|/|1+b|)^2\},
\end{equation}
where $b\neq\mp1$ (or equivalently $ad\neq0$), then
there are constants $D_8>0$ and $D_9>0$ such that
\begin{equation}
|B_n^+|+|B_n^-|\leq D_8|\gamma_n|+D_9|\mu_n-\lambda_n^+|
\end{equation}
holds.
\end{proposition}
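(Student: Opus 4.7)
The proof closely parallels that of Proposition~65 in \cite{instability}, with the general boundary condition $bc$ and the functional $\ell_0$ replacing the Dirichlet data throughout. Without loss of generality assume case $(i)$, namely $M|B_n^+|<|B_n^-|$; case $(ii)$ is symmetric under swapping the roles of $e_n^1,e_n^2$ (and correspondingly of $f_{n,1}^0,f_{n,2}^0$, etc.). Since $|B_n^+|+|B_n^-|\leq (1+1/M)|B_n^-|$, it suffices to bound $|B_n^-|$ by a constant multiple of $|\gamma_n|+|\mu_n-\lambda_n^+|$. The plan has two steps: first control $|\xi_n|$ by $|\gamma_n|+|\mu_n-\lambda_n^+|$, then control $|B_n^-|$ by $|\xi_n|$ (with an extra term $|B_n^+|$ that will be absorbed using $M|B_n^+|<|B_n^-|$).

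For the first step, rearrange identity (\ref{ilker6}) as
$$\xi_n\langle f_n,\tilde g_n\rangle = \frac{(\mu_n-\lambda_n^+)\langle G_n,\tilde g_n\rangle}{t_n}+\gamma_n\langle\varphi_n,\tilde g_n\rangle.$$
Using the case-$(i)$ bounds from Lemma~\ref{lem1} in the estimate (\ref{ilker5}), together with the explicit expression (\ref{onedimension1}) for $A_n,B_n$, the hypothesis (\ref{moremore}) gives $|A_n|\sqrt{M}>2|B_n|$; consequently $|\langle f_n,\tilde g_n\rangle|$ has a positive lower bound for large $|n|$. The quantity $|t_n|=|\ell_0(f_n)|/\sqrt{|\ell_0(f_n)|^2+|\ell_0(\varphi_n)|^2}$ is bounded below via the ratio estimate of Lemma~\ref{zordu1}, and $|\langle G_n,\tilde g_n\rangle|$, $|\langle\varphi_n,\tilde g_n\rangle|$ are trivially bounded above. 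This yields $|\xi_n|\leq C_1|\mu_n-\lambda_n^+|+C_2|\gamma_n|$ for constants $C_1,C_2$ depending on $M$, $a$, $b$.

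For the second step, the self-adjointness of $L^0$ on the periodic/antiperiodic eigenfunctions $e_n^1,e_n^2$ gives $\xi_n=\langle L\varphi_n,f_n\rangle=\langle V\varphi_n,f_n\rangle-\langle\varphi_n,Vf_n\rangle$. Replacing $f_n,\varphi_n$ by $f_n^0,\varphi_n^0$ via Lemma~\ref{remark1} and expanding using (\ref{test1}) reduces $\xi_n$ to a Fourier-weighted combination of $p(\pm n)$ and $q(\pm n)$; Proposition~35 in \cite{instability} identifies these coefficients with $B_n^\mp$ up to vanishing corrections. The case-$(i)$ dominance $|f_{n,1}^0|^2\geq M/(M+1)$, $|f_{n,2}^0|^2\leq 1/(M+1)$ extracts the $B_n^-$ contribution and produces an inequality $|B_n^-|\leq C_3|\xi_n|+C_4|B_n^+|$. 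Since $M|B_n^+|<|B_n^-|$, enlarging $M$ (as (\ref{moremore}) already demands) absorbs the $C_4|B_n^+|$ term, and chaining with the first step delivers the required bound. The principal obstacle is precisely this second step, where the intrinsic triangular entry $\xi_n$ of $L|_{E_n}$ must be bridged with the Lyapunov--Schmidt coefficient $B_n^-$ of the matrix $S(\lambda_n^+)$ on $E_n^0$ --- a passage that hinges on the projection estimates of Lemma~\ref{remark1} together with the Fourier-coefficient asymptotics of Proposition~35 in \cite{instability}.
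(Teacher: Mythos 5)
Your first step is exactly the paper's: rearrange (\ref{ilker6}), bound $|t_n|$ from below via Lemma \ref{zordu1} (the paper gets $|t_n|\geq\sqrt{D_3^2/(1+D_3^2)}$ from $\ell_0(G_n)=0$), and bound $|\langle f_n,\tilde g_n\rangle|$ from below using Lemma \ref{lem1}, (\ref{ilker5}), (\ref{onedimension1}) and the hypothesis (\ref{moremore}); this yields $|\xi_n|\leq D_6|\mu_n-\lambda_n^+|+D_7|\gamma_n|$ just as in the paper. The divergence, and the genuine gap, is in your second step. You propose to pass from $\xi_n=\langle V\varphi_n,f_n\rangle-\langle\varphi_n,Vf_n\rangle$ to $B_n^\pm$ by replacing $f_n,\varphi_n$ with $f_n^0,\varphi_n^0$ (Lemma \ref{remark1}) and then identifying the resulting Fourier coefficients $p(\pm n),q(\pm n)$ with $\beta_n^\mp(z_n^+)$ ``up to vanishing corrections.'' Both replacements introduce \emph{additive} errors: the first is of size $O(\|V\|\,\|P_n-P_n^0\|)$, and the second is of the size of the tail of the series defining $\beta_n^\pm$. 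These errors tend to zero, but they are not dominated by $|\gamma_n|$, $|\mu_n-\lambda_n^+|$, or $|B_n^\pm|$ themselves, all of which may decay arbitrarily fast (e.g.\ for smooth or finite-zone potentials). So what your argument actually yields is $|B_n^-|\leq C_3|\xi_n|+C_4|B_n^+|+\epsilon_n$ with an uncontrolled $\epsilon_n\to0$, which does not give the asserted inequality $|B_n^+|+|B_n^-|\leq D_8|\gamma_n|+D_9|\mu_n-\lambda_n^+|$. (Also, Proposition 35 of \cite{instability} provides sup-norm smallness of $\alpha_n,\beta_n^\pm$ and their derivatives, not an identification of $\beta_n^\pm$ with single Fourier coefficients modulo terms controlled by $B_n^\pm$.)

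The paper closes this step differently and without any loose remainder: it invokes Lemma 60 and Lemma 59 of \cite{instability}, which give the two-sided comparison
\begin{equation*}
\tfrac{1}{2}\bigl(|B_n^+|+|B_n^-|\bigr)\;\leq\;\bigl\|\bigl(z_n^+-S(\lambda_n^+)\bigr)P_n^0\varphi_n\bigr\|\;\leq\;2\bigl(|\xi_n|+|\gamma_n|\bigr),
\end{equation*}
so that $|B_n^+|+|B_n^-|\leq4|\xi_n|+4|\gamma_n|$ directly, and then chains this with the bound on $|\xi_n|$ from the first step. This operator-level inequality on $E_n^0$ is precisely the bridge you flag as the principal obstacle; your Fourier-coefficient route does not supply it. To repair your proof you should replace the second step by the citation of Lemmas 59--60 (or reprove their content), after which the rest of your argument goes through.
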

\begin{proof}
We prove the cases when $n$ is even and $n$ is odd simultanously. Assume the case $(i)$ in (\ref{subcases}) for the given $M$ in the hypothesis.
We know that $\ell_0(G_n)=0$ where $G_n=s_nf_n+t_n\varphi_n$, so
$|\ell_0(f_n)|/|\ell_0(\varphi_n)|=|t_n|/|s_n|$ and by Lemma \ref{zordu1}
$$0<D_3\leq|t_n|/|s_n|\leq D_4$$ and
$$1=|s_n|^2+|t_n|^2\leq\frac{|t_n|^2}{D_3^2}+|t_n|^2,$$ so we obtain
\begin{equation}\label{lower1}
|t_n|\geq\sqrt{\frac{D_3^2}{1+D_3^2}}>0.
\end{equation}
Furthermore, the inequality $M|B_n^+|<|B_n^-|$ and its consequence with Lemma \ref{lem1} give the following estimation to get a lower bound for $|\langle f_n,\tilde{g}_n\rangle|$ by making use of (\ref{ilker5}), (\ref{moremore}) and the coefficients in (\ref{onedimension1}) as
\begin{align*}
|\langle f_n,\tilde{g}_n\rangle|&\geq|\langle f_n^0,g_n^0\rangle|-\kappa_n=|\overline{A_n}f_{n,1}^0+\overline{B_n}f_{n,2}^0|-\kappa_n\\
&\geq\frac{|1-(-1)^nb|}{\sqrt{|a|^2+|1-(-1)^nb|^2}}|f_{n,1}^0|-\frac{|a|}{\sqrt{|a|^2+|1-(-1)^nb|^2}}|f_{n,2}^0|-\kappa_n\\
&\geq\frac{|1-(-1)^nb|}{\sqrt{|a|^2+|1-(-1)^nb|^2}}\cdot\frac{\sqrt{M}}{\sqrt{M+1}}-\frac{|a|}{\sqrt{|a|^2+|1-(-1)^nb|^2}}\cdot\frac{1}{\sqrt{M+1}}-\kappa_n\\
&\geq\frac{|a|}{\sqrt{|a|^2+|1-(-1)^nb|^2}.\sqrt{M+1}}-\kappa_n.
\end{align*}
So, for large enough $|n|$ we get
\begin{equation}\label{lower2}
|\langle f_n,\tilde{g}_n\rangle|\geq D_5>0,
\end{equation}
where 
$$D_5=min\bigg\{\frac{|a|}{2\sqrt{|a|^2+|1-b|^2}.\sqrt{M+1}}\ ,\ \frac{|a|}{2\sqrt{|a|^2+|1+b|^2}.\sqrt{M+1}}\bigg\}\cdot$$
Similarly, the case $(ii)$ in (\ref{subcases}) also gives the existence of a positive lower bound for $|\langle f_n,\tilde{g}_n\rangle|$.

Now, we consider the equation (\ref{ilker6}). First, we note that $t_n=-\tau_n.\ell_0(f_n)$ and $|\ell_0(f_n)|\geq|\ell_0(f_n^0)|-\kappa_n\geq C_0-\kappa_n,$ where $C_0$ is a positive number depending on general boundary conditions and $\kappa_n\leq C_0/2.$ We also have that $|\langle
f_n,\tilde{g}_n\rangle|\geq D_5>0.$ Hence, we may divide both sides of the equality (\ref{ilker6}) by $\tau_n^{-1}t_n\langle f_n,\tilde{g}_n\rangle$ and get that 
$$|\xi|=|{\frac{\tau_n^{-1}(\mu_n-\lambda_n^+)\langle
G_n,\tilde{g}_n\rangle+\tau_n^{-1}t_n\gamma_n\langle\varphi_n,\tilde{g}_n\rangle}{\tau_n^{-1}t_n\langle
f_n,\tilde{g}_n\rangle}}|.$$
Then, due to (\ref{ilker4}), (\ref{lower1}), (\ref{lower2}) we have
\begin{align*}
|\xi_n|&=|{\frac{\tau_n^{-1}(\mu_n-\lambda_n^+)\langle
G_n,\tilde{g}_n\rangle+\tau_n^{-1}t_n\gamma_n\langle\varphi_n,\tilde{g}_n\rangle}{\tau_n^{-1}t_n\langle
f_n,\tilde{g}_n\rangle}}|\\
&\leq\frac{|\mu_n-\lambda_n^+|(|C|+1)+|\gamma_n|.|\ell_0(f_n)|.|\overline{A_nf_{n,2}^0}-\overline{B_nf_{n,1}^0}+1|}{|-\ell_0(f_n)|.|\langle
f_n,\tilde{g}_n\rangle|}\\
&\leq\frac{|\mu_n-\lambda_n^+|(|C|+1)+3.|\gamma_n|.(1+|b|+|a|+1)}{\frac{C_0}{2}D_5}\cdot
\end{align*}
So, we obtain
\begin{equation}\label{haha}
|\xi_n|\leq D_6|\mu_n-\lambda_n^+|+D_7|\gamma_n|,
\end{equation}
where
$$D_6=\frac{|C|+1}{\frac{C_0}{2}D_5},\quad D_7=\frac{3.(2+|b|+|a|)}{\frac{C_0}{2}D_5}\cdot$$
We also have
\begin{equation}\label{haha2}
\frac{1}{2}(|B_n^+|+|B_n^-|)\leq\|(z_n^+-S(\lambda_n^+))P_n^0\varphi\|
\end{equation}
for sufficiently large $|n|$ by Lemma 60 in \cite{instability}. Moreover, we have
\begin{equation}\label{haha3}
\|(z_n^+-S(\lambda_n^+))P_n^0\varphi\|\leq2(|\xi_n|+|\gamma_n|)
\end{equation}
for sufficiently large enough $|n|$ by Lemma 59 in \cite{instability}.
Hence, by (\ref{haha}), (\ref{haha2}) and (\ref{haha3}) we obtain
\begin{align*}
|B_n^+|+|B_n^-|&\leq4|\xi_n|+4|\gamma_n|\leq D_8|\mu_n-\lambda_n^+|+D_9|\gamma_n|,
\end{align*}
where $D_8=4D_6$ and $D_9=4D_7+4.$ This completes the proof.
\end{proof}

In the above results, the analogues of Proposition 62, Proposition 63 and Proposition 65 in \cite{instability} (which are used in the proof of Theorem 66 in \cite{instability}) are given as Proposition \ref{prop1}, Proposition \ref{prop2} and Proposition \ref{prop3}, respectively. Hence, as in Theorem 66 in \cite{instability} we get the proof of Theorem \ref{intro55555} which claims that for $n\in\mathbb{Z}$ with large enough $|n|$ there are constants $K_1>0$ and $K_2>0$ such that
$$K_1(|\beta_n^-(z_n^*)|+|\beta_n^+(z_n^*)|)\leq|\lambda^+_n-\lambda_n^-|+|\mu_n^{bc}-\lambda_n^+|\leq K_2(|\beta_n^-(z_n^*)|+|\beta_n^+(z_n^*)|).$$

Theorem \ref{intro333} and Theorem \ref{intro4444} now follow from Theorem \ref{intro55555} due to the asymptotical equivalence of the sequence $(|\beta_n^+(z_n^*)|+|\beta_n^-(z_n^*)|)$ with each of the sequences $(\Delta_n)$ and $(\Delta_n^{bc})$.


\begin{thebibliography}{10}
\bibitem{ahmet} A. Batal,  Characterization  of Potential smoothness and Riesz basis property of Hill-Schr\"odinger operators with singular periodic potentials in terms of periodic, antiperiodic and Neumann spectra, Sabanci University PhD Dissertation, 2014.

\bibitem{ahmetarxive} A. Batal, Characterization of potential smoothness and Riesz basis property of Hill-Schr\"odinger operators with singular periodic potentials in terms of periodic, antiperiodic and Neumann spectra, arXiv:1309.6293 [math.SP] 24 Sep 2013. 

\bibitem{AB01} A. Batal,
Characterization of potential smoothness and the Riesz basis
property of the Hill-€"Schr\"odinger operator in terms of
periodic, antiperiodic and Neumann spectra, J. Math. Anal. Appl. {\bf 405(2)}
(2013), 453--465.

\bibitem{unconditional}   P. Djakov and B. Mityagin, Unconditional
convergence of spectral decompositions of 1D operators with regular
boundary conditions, Indiana University Mathematics Journal, Vol. {\bf 61}
(2012), 359--398.


\bibitem{instability}   P. Djakov and B. Mityagin, Instability zones of periodic 1-dimensional Schr\"odinger and Dirac
operators, Uspekhi Mat. Nauk, 2006, {\bf 61}:4(370), 77-182. (English: Russian Math. Surveys {\bf 61} (2006), no 4, 663--766).


\bibitem{criteria} P. Djakov and B. Mityagin, Criteria for existence of Riesz bases consisting of root functions of Hill and 1D Dirac operators, J. Funct. Anal. {\bf 263} (2012), no. 8, 2300-2332.

\bibitem{spectra} P. Djakov and B. Mityagin,
Spectra of 1-D periodic Dirac operators and smoothness of potentials.
C. R. Math. Acad. Sci. Soc. R. Can. {\bf 25} (2003), 121--125.

\bibitem{instability2} P. Djakov and B. Mityagin,
Instability zones of a periodic 1D Dirac operator and smoothness of
its potential. Comm. Math. Phys. {\bf 259} (2005), 139--183.

\bibitem{DM3} P. Djakov and B. Mityagin, Smoothness of Schr\"odinger
operator potential in the case of Gevrey type asymptotics of the
gaps, J. Funct. Anal. {\bf 195} (2002), 89--128.

\bibitem{DM5} P. Djakov and B. Mityagin,
Spectral triangles of Schr\"odinger operators with complex
potentials. Selecta Math. (N.S.) {\bf 9} (2003),  495--528.

\bibitem{Gas} M. G. Gasymov, Spectral analysis of a class of
second order nonselfadjoint differential operators, Functional Anal.
and its Appl. 14 (1980), 14--19.

\bibitem{GT11}  F. Gesztesy and V. Tkachenko,
A Schauder and Riesz basis criterion for non-self-adjoint
Schr\"odinger operators with periodic and anti-periodic boundary
conditions, J. Differential Equations {\bf 253} (2012), 400--437.

\bibitem{grebert} B. Gr\'{e}bert, T. Kappeler and B. Mityagin, Gap estimates of the spectrum of the Zakharov-Shabat system, Appl. Math. Lett. {\bf 11}:4 (1998), 95-97.

\bibitem{grebert2} B. Gr\'{e}bert and T. Kappeler, Estimates on periodic and Dirichlet eigenvalues for the Zakharov-Shabat system, Asymptot. Anal. {\bf 25}:3-4 (2001), 201-237; "Erratum", Asymptot. Anal. {\bf 29}:2 (2002), 183. 

\bibitem{Ho1} H. Hochstadt, Estimates on the stability intervals
for the Hill's equation, Proc. AMS  {\bf 14} (1963), 930--932.

\bibitem{Ho2} H. Hochstadt, On the determination of a Hill's
equation from its spectrum, Arch. Rational Mech. Anal. {\bf 19}
(1965), 353--362

\bibitem{KaMi01}  T. Kappeler and B. Mityagin, Estimates for
periodic and Dirichlet eigenvalues of the Schr\"odinger operator,
SIAM J. Math. Anal. {\bf 33} (2001), 113--152.


\bibitem{kst} T. Kappeler, F. Serier and P. Topalov, On the characterization of the smoothness of skew-adjoint potentials in periodic Dirac operators, Journal of Functional Analysis {\bf 256}, no. 7 (2009): 2069--2112.

\bibitem{LP} V. F. Lazutkin and T. F. Pankratova, Asymptotics of
the width of gaps in the spectrum of the Sturm-Liouville operators
with periodic potential, Soviet Math. Dokl. Vol. {\bf 15} (1974),
649--653.

\bibitem{MO1}  V. A. Marchenko and I. V. Ostrovskii,
Characterization of the spectrum of Hill's operator, Matem. Sborn.
{\bf 97} (1975), 540-606; English transl. in Math. USSR-Sb. {\bf 26
(175)}.

\bibitem{MT} H. McKean and E. Trubowitz, Hill's operator and
hyperelliptic function theory in the presence of infinitely many
branch points.  Comm. Pure Appl. Math. {\bf 29} (1976), 143--226.

\bibitem{jjsvt} J,-J. Sansuc and V. Tkachenko, Spectral parametrization of non-selfadjoint Hill"s operators, J. Differential Equations {\bf 125}:2 (1996), 366-384.

\bibitem{jjsvt2} J,-J. Sansuc and V. Tkachenko, Spectral properties of non-selfadjoint Hill"s operators with smooth potentials, Algebraic and geometric methods in mathematical physics (Kaciveli, 1993) (A. Boutet de Monvel and V. Mar\u{c}enko, eds.), Math. Phys. Stud., vol. 19, Kluwer Acad. Publ., Dordrecht 1996, pp.371-385.  

\bibitem{Tk92} V. Tkachenko, Spectral analysis of the
nonselfadjoint Hill operator, (Russian) Dokl. Akad. Nauk SSSR  {\bf
322}  (1992), 248--252; translation in Soviet Math. Dokl.  {\bf 45}
(1992),  78--82.

\bibitem{vt} V. Tkachenko, Discriminants and generic spectra of nonselfadjoint Hill"s operators, Spectral operator theory and related topics, Adv. Soviet Math, vol. 19, Amer. Math. Soc., Providence, RI 1994, pp. 41-71.

\bibitem{Tr} E. Trubowitz, The inverse problem for periodic
potentials, Comm. Pure Appl. Math. {\bf 30} (1977), 321--342.

\end{thebibliography}
\end{document}